\newtheorem{Cor}{Corollary}
 \newtheorem{Lemma}{Lemma}
 \newtheorem{ex}{Example}
 \newtheorem{p}{Proposition}
 \theoremstyle{definition}
 \theoremstyle{remark}
 \newtheorem{Remark}[Lemma]{Remark}
 \numberwithin{equation}{subsection}
\begin{document}
\title[INDUCED DYNAMICS OF NON-AUTONOMOUS DISCRETE DYNAMICAL SYSTEMS]{INDUCED DYNAMICS OF NON-AUTONOMOUS DISCRETE DYNAMICAL SYSTEMS}%
\author{PUNEET SHARMA}
\address{Department of Mathematics, I.I.T. Jodhpur, Old Residency Road, Ratanada, Jodhpur-342011, INDIA}%
\email{puneet.iitd@yahoo.com}%



\keywords{hyperspaces, non-autonomous dynamical systems, transitivity, weakly
mixing, topological mixing, topological entropy, Li-Yorke chaos}

\begin{abstract}
In this paper, we investigate the dynamics on the hyperspace induced by a non-autonomous dynamical system $(X,\mathbb{F})$, where the non-autonomous system is  generated by a sequence $(f_n)$ of continuous self maps on $X$. We relate the dynamical behavior of the induced system on the hyperspace with the dynamical behavior of the original system $(X,\mathbb{F})$. We derive conditions under which the dynamical behavior of the non-autonomous system extends to its induced counterpart(and vice-versa). In the process, we discuss properties like transitivity, weak mixing, topological mixing, topological entropy and various forms of sensitivities. We also discuss properties like equicontinuity, dense periodicity and Li-Yorke chaoticity for the two systems. We also give examples when a dynamical notion of a system cannot be extended to its induced counterpart (and vice-versa).
\end{abstract}
\maketitle

\section{INTRODUCTION}

For many years, dynamical systems have been studied to investigate many of the physical or natural phenomenon occurring in nature. Using dynamical systems, long term behavior of various natural phenomenon have been predicted to sufficient accuracy and many of the underlying processes have been investigated using the theory of dynamical systems\cite{de,str}. In many cases, the structure of underlying space is a pivotal factor in determining the dynamical behavior of the system. This has resulted in further investigations in the field of topological dynamics and a lot of work in this area has already been done\cite{bc,bs,de}. However, most of the phenomenon occurring in nature arise collectively as union of several individual components and hence set valued dynamics plays an important role in understanding any of these phenomenon. Such an approach has found applications in various branches of sciences and engineering\cite{klaus,seb,yang}. Thus there was a strong need to develop and understand the dynamical behavior of the induced set valued systems. As a result, many of the natural questions relating the dynamical behavior of a system and its set valued counterpart have been raised and answered\cite{ba,pa1,pa2}. However, most of the investigations have been made when the rule determining the underlying system is time-invariant. Such an approach fails to investigate the dynamics of a general system as the governing rule of a natural phenomenon may change with time. Thus there is a need to understand the dynamics of the induced system when the underlying system is non-autonomous in nature. In this paper, we investigate the dynamical behavior of the induced system, when induced by a non-autonomous system. We prove that many of the results for the non-autonomous case are analogous extensions of the autonomous case. We derive conditions under which the dynamical behavior of a system is extended to the hyperspace(and vice-versa). In the process, we discuss properties like dense periodicity, various forms of mixing and sensitivity, equicontinuity and Li-Yorke chaoticity for the two systems. We establish our results when the hyperspace is endowed with a general admissible hyperspace topology. Before we move further, we provide some of the basic concepts required.

\subsection{Dynamical Systems}

Let $(X,d)$ be a compact metric space and let $\mathbb{F}= \{f_n: n
\in \mathbb{N}\}$ be a family of continuous self maps on $X$. Let $(X,\mathbb{F})$ denote the non-autonomous system generated by the family $\mathbb{F}$ via the rule $x_{n}= f_n(x_{n-1})$. For any point
$x\in X$, $\{ f_n \circ f_{n-1} \circ \ldots \circ f_1(x) :
n\in\mathbb{N}\}$ defines the orbit of $x$. The objective of study
of any non-autonomous dynamical system is to investigate the orbit of
an arbitrary point $x$ in $X$. For notational convenience, let
$\omega_n(x) = f_n\circ f_{n-1}\circ \ldots \circ f_1(x)$ denote the
state of the system after $n$ iterations. \\

A point $x$ is called \textit{periodic} for $\mathbb{F}$ if there exists $n\in\mathbb{N}$ such that $\omega_{nk}(x)=x$ for all $k\in
\mathbb{N}$. The least such $n$ is known as the period of the point $x$. The system $(X,\mathbb{F})$ is equicontinuous at a point $x\in X$ if for each $\epsilon>0$, there exists $\delta>0$ such that $d(x,y)<\delta$ implies $d(\omega_n(x),\omega_n(y))<\epsilon$ for all $n\in\mathbb{N},~~y\in X$. The system is called equicontinuous if it is equicontinuous at each point of $X$. The system $(X,\mathbb{F})$ is uniformly equicontinuous if for each $\epsilon>0$, there exists $\delta>0$ such that $d(x,y)<\delta$ implies $d(\omega_n(x),\omega_n(y))<\epsilon$ for all $n\in\mathbb{N},~~x,y\in X$. The system $(X,\mathbb{F})$ is \textit{transitive} (or $\mathbb{F}$ is transitive) if for each pair of open sets $U,V$ in $X$, there exists $n \in \mathbb{N}$ such that $\omega_n(U)\bigcap V\neq \phi$. Let $\mathbb{F}_n=\{f_{kn+1}\circ f_{kn+2}\circ\ldots\circ f_{(k+1)n}: k\in\mathbb{Z}^+\}$. The system $(X,\mathbb{F})$ is called $n$-transitive if the system $(X,\mathbb{F}_n)$ is transitive. If $(X,\mathbb{F})$ is $n$-transitive for each $n\in\mathbb{N}$, then the system is called totally transitive. The system $(X,\mathbb{F})$ is said to be \textit{weakly mixing} if for any two pairs $U_1, U_2$ and $V_1, V_2$ of non-empty open subsets of $X$, there exists a natural number $n$ such that $\omega_n(U_i) \bigcap V_i \neq \phi$ for $i=1,2$. Equivalently, we say that the system is weakly mixing if $\mathbb{F}\times\mathbb{F}$ is transitive. The system $(X,\mathbb{F})$ is said to be \textit{weakly mixing of order $k$} if for any collection $U_1, U_2,\ldots,U_k$ and $V_1, V_2,\ldots,V_k$ of non-empty open subsets of $X$, there exists a natural number $n$ such that $\omega_n(U_i) \bigcap V_i \neq \phi$ for $i=1,2,\ldots,k$. The system is said to be \textit{topologically mixing} if for every pair of non-empty open sets $U, V$ in $X$, there exists a natural number $K$ such that $\omega_n(U) \bigcap V \neq \phi$ for all $n \geq K$. The system is said to be \textit{sensitive} if there exists a $\delta>0$ such that for each $x\in X$ and each neighborhood $U$ of $x$, there exists $n\in \mathbb{N}$ such that $diam(\omega_n(U))>\delta$. If there exists $K>0$ such that $diam(\omega_n(U))>\delta$, $~~\forall n\geq K$, then the system is \textit{cofinitely sensitive}. A system $(X,\mathbb{F})$ is \textit{expansive}($\delta$-expansive) if for any pair of distinct elements $x,y \in X$, there exists $k \in \mathbb{Z}^+$ such that $d(\omega_k(x), \omega_k(y))>\delta$. A set $S$ is said to be $\delta$-\textit{scrambled} if for any distict pair of points $x,y\in S$, $\limsup\limits_{n\rightarrow \infty} d(\omega_n(x),\omega_n(y))>\delta$ but $\liminf\limits_{n\rightarrow \infty} d(\omega_n(x),\omega_n(y))=0$. A system is called Li-Yorke sensitive if there exists $\delta>0$ such that for each $x\in X$ and each neighborhood $U$ of $x$, there exists $y\in U$ such that $\{x,y\}$ is a $\delta$-scrambled set. A system $(X,\mathbb{F})$ is said to be \textit{Li-Yorke chaotic} if it contains an uncountable scrambled set.  A dynamical system $(X,f)$ has \textit{chaotic dependence on initial conditions} if for any $x \in
X$ and any neighborhood $U$ of $x$ there exists $y\in U$ such that $\limsup \limits_{n \rightarrow \infty} d(f^n(x),
f^n(y))> 0$ but $\liminf \limits_{n \rightarrow \infty} d(f^n(x),f^n(y)) =0$. In case the $f_n$'s coincide, the above definitions coincide with the known notions of an autonomous dynamical system. See \cite{bc,bs,de} for details.\\

We now define the notion of \textit{topological entropy} for a
non-autonomous system $(X,\mathbb{F})$.\\

Let $X$ be a compact space and let $\mathcal{U}$ be an open cover of
$X$. Then $\mathcal{U}$ has a finite subcover. Let $\mathcal{L}$ be
the collection of all finite subcovers and let $\mathcal{U}^*$ be
the subcover with minimum cardinality, say $N_{\mathcal{U}}$. Define
$H(\mathcal{U}) = log N_{\mathcal{U}} $. Then $H(\mathcal{U})$ is
defined as the \textit{entropy} associated with the open cover
$\mathcal{U}$. If $\mathcal{U}$ and $\mathcal{V}$ are two open
covers of $X$, define, $\mathcal{U} \vee \mathcal{V} = \{ U \bigcap
V : U \in \mathcal{U}, V \in \mathcal{V} \}$. An open cover $\beta$
is said to be refinement of open cover $\alpha$ i.e. $\alpha \prec
\beta$, if every open set in $\beta$ is contained in some open set
in $\alpha$. It can be seen that if $\alpha \prec \beta$ then
$H(\alpha) \leq H(\beta)$. For a self map $f$ on $X$, $f^{-1}
(\mathcal{U}) = \{ f^{-1} (U) : U \in \mathcal{U} \}$
is also an open cover of $X$. Define,\\

\centerline{$h _{\mathbb{F}, \mathcal{U}} = \limsup \limits_{k
\rightarrow \infty} \frac{H( \mathcal{U} \vee
\omega_1^{-1}(\mathcal{U}) \vee \omega_2^{-1}(\mathcal{U}) \vee
\ldots \vee \omega_{k-1}^{-1}(\mathcal{U}))}{k}$} \vskip .25cm

Then $\sup h _{\mathbb{F}, \mathcal{U}}$, where $\mathcal{U}$ runs
over all possible open covers of $X$ is known as the
\textit{topological entropy of the system $(X,\mathbb{F})$} and is
denoted by $h(\mathbb{F})$. In case the maps $f_n$ coincide, the
above definition coincides with the known notion of topological
entropy. See \cite{bc,bs} for details.

%
%

\subsection{Hyperspaces} Let $(X,\tau)$ be a Haudorff topological space. A hyperspace associated with $(X,\tau)$ is a pair $(\Psi,\Delta)$  where $\Psi$ comprises of a subfamily of all non-empty closed subsets of $X$ and $\Delta$ is a topology on $\Psi$ generated using topology on $X$. The set $\Psi$ may comprise of all compact subsets of $X$ or all compact-connected subsets of $X$ or all closed subsets of $X$. A hyperspace topology is called admissible if the map $x \rightarrow \{ x \}$ is continuous. More generally, if $\Psi$ and $\Delta$ are fixed, the induced space $(\Psi, \Delta)$ is called the hyperspace generated from the space $(X, \tau)$. Let $CL(X)$ and $\mathcal{K}(X)$ denote set of all non-empty closed and non-empty compact subsets of $X$ respectively. We now give some of the standard hyperspace topologies.

Let $I$ be a finite index set and for all such $I$, let $\{ U_i : i
\in I \}$ be a collection of open subsets of $X$. Define for each
such collection of open sets,

$<U_{i}>_{i \in I}$ = $ \{E \in CL(X) :E \subseteq \bigcup_{i \in I}
U_{i}$ and E $\bigcap U_{i} \neq \phi \textrm{ } \forall i \}$

The topology generated by such collections is known as the
\textit{Vietoris topology}.

Let $(X,d)$ be a metric space. For any two closed subsets $A_1, A_2$
of $X$, define,

\centerline{$ d_H (A_1, A_2) = \inf \{ \epsilon >0 : A \subseteq
S_{\epsilon} (B) \text{ and } B \subseteq S_{\epsilon} (A) \} $}

It is easily seen that $d_H$ defined above is a metric on $CL(X)$
and is called \textit{Hausdorff metric} on $CL(X)$. This metric
preserves the metric on $X$, i.e. $d_H(\{x\}, \{y\}) = d(x,y)$ for
all $x,y \in X$. The topology generated by this metric is known as
the \textit{Hausdorff metric topology} on $CL(X)$ with respect to
the metric $d$ on $X$.

It is known that the Hausdorff metric topology equals the Vietoris
topology if and only if the space $X$ is compact.

Let $\Phi$ be a subfamily of the collection of all non-empty closed
subsets of $X$. The \textit{Hit and Miss topology} determined by the
collection $\Phi$ is the topology having subbasic open sets of the
form $U^-$ where $U$ is open in $X$ and $(E^c)^{+}$ with $E \in
\Phi$. As a terminology, $U$ is called the hit set and any member
$E$ of $\Phi$ is referred as the miss set.

For a metric space $(X,d)$ and a given collection $\Phi$ of closed
subsets of $X$, the \textit{Hit and Far Miss topology} or
\textit{Proximal Hit and Miss Topology} determined by the collection
$\Phi$ is the topology having subbasic open sets of the form $U^-$
where $U$ is open in $X$ and $(E^c)^{++}$ with $E \in \Phi$.

Here the collection hits each open set $U$ and far misses the
complement of each member of $\Phi$ and hence forms a hit and far
miss topology. 

A typical member of the base for the \textit{Lower Vietoris
topology} on the hyperspace $CL(X)$ consists of the set, each of
whose elements intersect or $hit$ finitely many open sets $U$, i.e.
a typical basic open set is the intersection of finitely many $U^-$.
The Lower Vietoris topology is the smallest topology on the
hyperspace containing all the sets $U^-$ where $U$ is open in $X$.

A typical basic open set for the \textit{Upper Vietoris topology} on
the hyperspace $CL(X)$ is of the form $U^+$ where $U$ is open in
$X$. Thus, given a closed set $C$, a typical member of the base in
the Upper Vietoris topology is the set whose elements are the
elements of the hyperspace disjoint from the closed set $C$.

It is observed that the Vietoris topology equals the join of Upper Vietoris and Lower
Vietoris topology, and is infact an example of a hit and miss
topology. More generally, it is known that any admissible hyperspace topology is of hit-and-Miss or Hit and Far Miss type. See\cite{be,mi,nai} for details.

%

\section{Main Results}

Let $(X,\mathbb{F})$ be a non-autonomous topological dynamical system and let $\Psi\subset\mathcal{K}(X)$ be a hyperspace admissible with $\mathbb{F}$, i.e. for any $A\in\Psi$,$\omega_k(A)\in\Psi$ for all $k\in\mathbb{N}$. Then for any initial seed $A_0\in\Psi$, the non autonomous system $(X,\mathbb{F})$ induces a non-autonomous system $(X,\overline{\mathbb{F}})$ on the hyperspace via the relation $A_{n}=f_n(A_{n-1})=\omega_n(A_0)$. Let $\overline{\omega_k}(A)$ denote the state of the point $A$(in the hyperspace) after $k$ iterations. Let the hyperspace be endowed with a topology such that the each of the induced non-autonomous system $(X,\overline{\mathbb{F}})$ is continuous. It is intuitive to question the relation between the dynamical behavior of the non-autonomous system and its induced counterpart. For example, if a given non-autonomous system is transitive/weakly mixing/topologically mixing, what can be concluded about its induced counterpart (and vice-versa). If a given non-autonomous system exhibits sensitivity/strong sensitivity/Li-Yorke sensitivity, what can be concluded about its induced counterpart (and vice-versa). What dynamics does the induced system exhibit when the original system is equicontinuous? Such questions have been raised and answered in case of an autonomous system\cite{ba,pa1,pa2}. We prove that answers to many of the question remain the same in a non-autonomous setting and hence most of the results obtained for the autonomous case can be extended analogously to the non-autonomous case. We prove that the induced system is topologically mixing if and only if the original system is topologically mixing. We prove that strong sensitivity is also equivalent for the two systems under consideration.  We prove that if $\mathbb{F}$ is commutative then a system exhibits weak mixing of all orders if and only if the induced system is weak mixing. We extend our studies to properties like dense periodicity, transitivity, equicontinuity, uniform equicontinuity, various notions of sensitivities and Li-Yorke chaos. We now establish the stated results.

\begin{p}
Let $\mathcal{F} (X) \subseteq \Psi$ and $\Psi$ be
endowed with any admissible hyperspace topology. Then, $(X,\mathbb{F})$ has dense set of periodic points $\Rightarrow$ $(\Psi,\overline{\mathbb{F}})$ has dense set of periodic points.
\end{p}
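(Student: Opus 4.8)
The plan is to show that the set of periodic points of $(\Psi,\overline{\mathbb{F}})$ is dense by approximating an arbitrary basic open set in the hyperspace topology using periodic points built out of periodic points of the base system. First I would recall that, since $\Psi$ carries an admissible hyperspace topology, a basic open neighborhood of a given closed set can be described in hit-and-(far-)miss terms: there is an open set $V = \bigcup_{i\in I} U_i$ in $X$ together with finitely many open sets $U_i$, such that the basic open set consists of those $A\in\Psi$ with $A\subseteq V$ and $A\cap U_i\neq\phi$ for each $i$. I will fix an arbitrary $A\in\Psi$ and such a basic neighborhood $\mathcal{W}$ of $A$, and aim to produce a periodic point of $(\Psi,\overline{\mathbb{F}})$ lying in $\mathcal{W}$.

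Next I would exploit the density of periodic points of $(X,\mathbb{F})$: for each index $i$, since $A\cap U_i\neq\phi$, the set $A\cap U_i$ is a nonempty open-in-$A$ set meeting $U_i$, so I can choose a periodic point $p_i\in U_i$ of $\mathbb{F}$, say of period $n_i$. The natural candidate for a periodic point in the hyperspace is the finite set $P=\{p_1,p_2,\ldots,p_k\}$ (here $k=\abs{I}$), provided $\mathcal{F}(X)\subseteq\Psi$ guarantees that such a finite set belongs to $\Psi$. By construction $P$ hits each $U_i$, and because every chosen $p_i$ lies in some $U_i\subseteq V$ we also have $P\subseteq V$, so $P\in\mathcal{W}$. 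The remaining point is to verify that $P$ is genuinely periodic for $\overline{\mathbb{F}}$: taking $n$ to be the least common multiple of the periods $n_1,\ldots,n_k$, I would check that $\overline{\omega}_{nj}(P)=\{\omega_{nj}(p_1),\ldots,\omega_{nj}(p_k)\}=\{p_1,\ldots,p_k\}=P$ for all $j$, using that the induced map sends a finite set to the set of images of its points and that each $p_i$ satisfies $\omega_{n_i m}(p_i)=p_i$ for all $m$.

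I expect the main obstacle to be purely topological rather than dynamical: namely, translating the phrase \emph{any admissible hyperspace topology} into a concrete enough description of basic open sets to carry out the hitting/missing argument uniformly. The paper has recorded that every admissible hyperspace topology is of hit-and-miss or hit-and-far-miss type, so the subbasic open sets have the form $U^-$ (hit an open set) and $(E^c)^+$ or $(E^c)^{++}$ (miss or far-miss a member $E$ of a fixed collection $\Phi$). The care needed is to handle the miss conditions: a basic neighborhood of $A$ may require $A$ to miss certain closed sets $E$, and I must ensure the finite approximating set $P$ also misses those same $E$. This is arranged by choosing each periodic point $p_i$ inside the relevant open sets carving out the neighborhood of $A$ (so that $p_i$ stays far from each miss set $E$), which is possible precisely because $A$ itself satisfies the miss conditions and the periodic points of $\mathbb{F}$ are dense. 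Once the basic open set is written in this subbasic form and the $p_i$ are chosen to respect both the hit and the miss constraints, the construction of $P$ and the verification of its periodicity proceed as above, completing the argument.
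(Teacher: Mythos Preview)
Your proposal is correct and follows essentially the same route as the paper: write a basic open set in the hyperspace in hit-and-(far-)miss form, choose a periodic point of $(X,\mathbb{F})$ inside each hit set intersected with the complement of the union of the miss sets, and observe that the resulting finite set is periodic for $\overline{\mathbb{F}}$ with period the lcm of the individual periods. The paper handles the miss sets from the outset by setting $W_i = V_i \cap C^c$ (with $C$ the union of the miss sets) before choosing the periodic points, which is exactly the refinement you arrive at in your third paragraph.
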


\begin{proof}
Let the  hyperspace $\Psi$ be endowed with an admissible topology $\Delta$. As every admissible topology on the hyperspace is of hit and
miss or hit and far miss type, let the topology $\Delta$ be determined by the collection $\mathcal{C}$. Let $\mathcal{U}$ be a non empty basic open set in the hyperspace. Then $\mathcal{U}$ hits finitely many open sets, say $V_1, V_2, \ldots, V_{n}$ and misses(far misses)
finitely many elements of $\mathcal{C}$ say, $C_1, C_2, \ldots, C_{m}$. Let $ C = \bigcup C_j$. Thus each $ W_i = V_i \bigcap C^c$ is non-empty, open in $X$. As periodic points are dense for $(X,\mathbb{F})$, there exists $x_i\in W_i$  and an integer $r_i\in\mathbb{N}$ such that for any $i=1,2,\ldots,n$, $\omega_{r_i k}(x_i)=x_i~~\forall k\in\mathbb{N}$. Let $l= lcm\{r_1,r_2,\ldots,r_n\}$. Then the set $\{x_1, x_2, ... x_n\}$ is periodic with period $l$. As $\{x_1,x_2,\ldots,x_n\}\in \mathcal{U}$, the hyperspace has a dense set of periodic points.
\end{proof}

\begin{Remark}
The above proof establishes that if the hyperspace contains all finite sets then the denseness of periodic points is extended to the hyperspace when the hyperspace is endowed with any admissible hyperspace topology. The proof is analogous to the autonomous case and is a natural extension of the result established in \cite{pa1}. We now give an example to show that the converse of the above does not hold good.
\end{Remark}

\begin{ex}
Let $\Sigma$ be the space of all one-sided sequences of $0$ and $1$ and let $\phi:\Sigma\rightarrow\Sigma$ be defined as $f(x=(x_1x_2\ldots))=x+(100\ldots)$ where addition is performed with carry to the right. It is known that $(\Sigma,\phi)$ does not contain any periodic point but the cylinder sets $[x_1x_2\ldots x_k]$ are periodic and hence the hyperspace $(\mathcal{K}(\Sigma),\overline{\phi})$ has dense set of periodic points\cite{ba}. Let $I$ be the identity operator on $\Sigma$ and let the non-autonomous system $(X,\mathbb{F})$ defined by the family $\mathbb{F}=\{I,\phi,I,\phi,\ldots,I,\phi,\ldots\}$. Then, $(X,\mathbb{F})$ fails to contain any periodic point but $(\mathcal{K}(\Sigma),\overline{\mathbb{F}})$ exhibits dense set of periodic points. Thus the converse of the above result fails to hold for the non-autonomous system.
\end{ex}

\begin{p}
If there exists a base $\beta$ for the topology on
$X$ such that $U^+$ is non empty and $U^+ \in \Delta$ for every $U
\in \beta$, then $(\Psi,\overline{\mathbb{F}})$ is transitive $\Rightarrow$
$(X,\mathbb{F})$ is transitive.
\end{p}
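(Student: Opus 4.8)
The plan is to exploit the upper-type sets $U^+ = \{E \in \Psi : E \subseteq U\}$ supplied by the hypothesis as a bridge between the two systems. By assumption, for every $U \in \beta$ the set $U^+$ is a non-empty member of $\Delta$; these are precisely the hyperspace open sets that I would feed into the transitivity of $(\Psi,\overline{\mathbb{F}})$.

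First I would fix an arbitrary pair of non-empty open sets $U, V \subseteq X$; the goal is to produce $n \in \mathbb{N}$ with $\omega_n(U) \cap V \neq \emptyset$. Since $\beta$ is a base for the topology on $X$, I can choose $U_0, V_0 \in \beta$ with $U_0 \subseteq U$ and $V_0 \subseteq V$. By the standing hypothesis, $U_0^+$ and $V_0^+$ are then non-empty open subsets of the hyperspace $(\Psi, \Delta)$.

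Next I would apply transitivity of $(\Psi,\overline{\mathbb{F}})$ to the pair $U_0^+, V_0^+$: there exist $n \in \mathbb{N}$ and $A \in U_0^+$ with $\overline{\omega_n}(A) \in V_0^+$. Unwinding the definitions of these sets gives $A \subseteq U_0 \subseteq U$, while, because the induced dynamics acts by $\overline{\omega_n}(A) = \omega_n(A)$, we also obtain $\omega_n(A) \subseteq V_0 \subseteq V$. Since every member of $\Psi \subseteq \mathcal{K}(X)$ is a non-empty set, I may pick a point $a \in A$; then $a \in U$ and $\omega_n(a) \in \omega_n(A) \subseteq V$, so that $\omega_n(a) \in \omega_n(U) \cap V$. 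This shows $\omega_n(U) \cap V \neq \emptyset$, and since $U, V$ were arbitrary, it establishes transitivity of $(X,\mathbb{F})$.

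The only real content lies in the choice of the correct hyperspace neighborhoods, so the main obstacle is conceptual rather than computational: transitivity on the hyperspace, stated for arbitrary open sets of $\Delta$, must be specialized to the particular sets $U_0^+$, whose availability and non-emptiness is exactly the role played by the hypothesis on $\beta$. Once one observes that the containment $A \subseteq U_0$ forces every point of $A$ to lie in $U$ (and likewise that the image lies in $V$), the passage from the set-level conclusion back to a point-level witness is immediate, the non-emptiness of the members of $\Psi$ being the single point that must not be overlooked.
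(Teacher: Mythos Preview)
Your argument is correct and follows essentially the same route as the paper's proof: pass to basic open sets $U_0,V_0\in\beta$, apply transitivity of the induced system to the non-empty open sets $U_0^+,V_0^+$, and then read off a point-level witness from any $a\in A$. The only differences are notational.
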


\begin{proof}
Let $U$ and $V$ be any two non-empty open sets in $X$. As $\beta$ forms a base for topology on $X$, there exists $U_1, V_1 \in \beta $
such that $U_1 \subseteq U$ and $V_1 \subseteq V$. As $U_1 ^+$ and $V_1 ^+$ are non empty open sets in the hyperspace $\Psi$ and $(\Psi,\overline{\mathbb{F}})$ is transitive, there exists $A\in U_1^+$  and $k\in\mathbb{N}$ such that $\overline{\mathbb{\omega}}_k(A)\in V_1 ^+$ or $\omega_k(A)\in V_1^+$. Consequently for any $a\in A$, $a\in A\subset U_1\subset U$ and $\omega_k(a)\in V_1\subset V$ and hence $(X,\mathbb{F})$ is transitive.
\end{proof}

\begin{Remark}
The above proof establishes the transitivity of the original function from the transitivity of the induced function. The result is an analogous extension from the autonomous case and does not come up as a surprise in the non-autonomous setting. It may be noted that identical arguments of the proof establish the n-transitivity of $(X,\mathbb{F})$ from n-transitivity of the induced system and hence total transitivity on the hyperspace implies total transitivity of $(X,\mathbb{F})$. Thus we get the following corollary.
\end{Remark}

\begin{Cor} If there exists a base $\beta$ for the topology on $X$ such that $U^+$ is non empty and $U^+ \in \Delta$ for every $U
\in \beta$, then $(\Psi, \overline{\mathbb{F}})$ is totally transitive $\Rightarrow~~$ $(X,\mathbb{F})$ is totally transitive.
\end{Cor}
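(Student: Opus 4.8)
The plan is to reduce the statement to the preceding Proposition applied not to $\mathbb{F}$ itself but to each of its block subsystems $\mathbb{F}_n$. By definition, total transitivity of $(\Psi,\overline{\mathbb{F}})$ means that for every $n\in\mathbb{N}$ the block system $(\Psi,(\overline{\mathbb{F}})_n)$ is transitive, and total transitivity of $(X,\mathbb{F})$ means that for every $n\in\mathbb{N}$ the block system $(X,\mathbb{F}_n)$ is transitive. Hence it suffices to fix an arbitrary $n$ and deduce transitivity of $(X,\mathbb{F}_n)$ from transitivity of $(\Psi,(\overline{\mathbb{F}})_n)$.

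The key structural observation is that passing to the induced (set-valued) system commutes with forming blocks. Since the set-valued image map satisfies $\overline{g\circ h}=\overline{g}\circ\overline{h}$ for any continuous $g,h$ (because $(g\circ h)(A)=g(h(A))$), the $k$-th generating map of $(\overline{\mathbb{F}})_n$, being the image map of the composition $f_{kn+1}\circ\cdots\circ f_{(k+1)n}$, is exactly the induced map generated by the $k$-th generating map of $\mathbb{F}_n$. Thus $(\overline{\mathbb{F}})_n=\overline{\mathbb{F}_n}$: the block system of the induced system is the induced system of the block system. In particular, the $j$-th iterate $\overline{\omega}_j$ of $(\Psi,(\overline{\mathbb{F}})_n)$ at a seed $A$ is precisely the image of $A$ under the $j$-th block iterate of $(X,\mathbb{F})$, so the admissibility of $\Psi$ with $\mathbb{F}$ guarantees that $(X,\mathbb{F}_n)$ induces a well-defined system on $\Psi$, namely $(\Psi,(\overline{\mathbb{F}})_n)$.

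With this identification in hand I would simply invoke the previous Proposition with $\mathbb{F}$ replaced by $\mathbb{F}_n$. The hypothesis on the base $\beta$ --- that $U^+$ is non-empty and lies in $\Delta$ for every $U\in\beta$ --- is a purely topological condition on $(X,\Delta)$ that does not involve the generating maps, so it is inherited verbatim by every block system. Thus, given non-empty open $U,V\subseteq X$, I would choose $U_1,V_1\in\beta$ with $U_1\subseteq U$ and $V_1\subseteq V$, use transitivity of $(\Psi,\overline{\mathbb{F}_n})$ to produce $A\in U_1^+$ and an iterate index $j$ with $\overline{\omega}_j(A)\in V_1^+$, and then for any $a\in A$ conclude that $a\in U_1\subseteq U$ while its $j$-th block iterate lies in $\overline{\omega}_j(A)\subseteq V_1\subseteq V$. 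This is exactly transitivity of $(X,\mathbb{F}_n)$, and since $n$ was arbitrary, $(X,\mathbb{F})$ is totally transitive.

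I expect no genuine obstacle here; the only point requiring care is the bookkeeping in the second paragraph, namely verifying cleanly that blocking and inducing commute and that the resulting object is the same system $(\Psi,(\overline{\mathbb{F}})_n)$ to which the Proposition applies. Once that commutation is pinned down, the argument is a verbatim repetition of the proof of the Proposition, exactly as anticipated in the Remark preceding this Corollary.
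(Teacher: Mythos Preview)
Your proposal is correct and follows exactly the line the paper indicates: the Remark preceding the Corollary says that ``identical arguments of the proof establish the $n$-transitivity of $(X,\mathbb{F})$ from $n$-transitivity of the induced system,'' and you carry this out by applying the Proposition verbatim with $\mathbb{F}$ replaced by $\mathbb{F}_n$. Your explicit verification that $(\overline{\mathbb{F}})_n=\overline{\mathbb{F}_n}$ is a careful justification of a step the paper leaves implicit, but the approach is the same.
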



\begin{p} Let $\mathcal{F} (X) \subseteq \Psi.$ If $\mathbb{F}$ is weakly mixing of all orders then $\overline{\mathbb{F}}$ is weak mixing. Further, if $\mathbb{F}$ is commutative and there exists a base $\beta$ for topology on $X$ such that $U^+ \in \Delta$ for every $U \in \beta$ then $\overline{\mathbb{F}}$ is weak mixing $\Rightarrow\mathbb{F}$ is weakly mixing of all orders.
\end{p}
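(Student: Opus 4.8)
The plan is to treat the two implications separately, reducing each to the behaviour of the cocycle $\omega_n$ on finitely many open sets of $X$. For the forward implication I would start with four arbitrary non-empty basic open sets $\mathcal{U}_1,\mathcal{U}_2,\mathcal{V}_1,\mathcal{V}_2$ of the hyperspace. Since $\Delta$ is of hit-and-miss (or hit-and-far-miss) type, each of these reduces, exactly as in the proof of the Proposition on dense periodic points, to a finite list of hit sets lying in the complement (far-region) of its miss sets; write these effective open sets as $W_1^i,\dots$ for the $\mathcal{U}$'s and $X_1^j,\dots$ for the $\mathcal{V}$'s, and let $G_1,G_2,H_1,H_2$ denote the ``upper'' regions in which the members of the respective basic sets must sit. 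To produce a single time $k$ witnessing $\overline{\omega}_k(\mathcal{U}_i)\cap\mathcal{V}_i\neq\phi$ for $i=1,2$, I would list all the conditions of the form $\omega_k(\text{hit set of }\mathcal{U}_i)\cap H_i\neq\phi$ together with $\omega_k(G_i)\cap(\text{hit set of }\mathcal{V}_i)\neq\phi$; if $N$ is their total number, weak mixing of order $N$ supplies one $k$ meeting all of them at once. Choosing one point from each witnessing set and collecting these points gives finite sets $A$ (for the pair $\mathcal{U}_1,\mathcal{V}_1$) and $B$ (for $\mathcal{U}_2,\mathcal{V}_2$); the hypothesis $\mathcal{F}(X)\subseteq\Psi$ guarantees $A,B\in\Psi$, and a direct check that all chosen points lie in $G_i$ and all their images in $H_i$ shows $A\in\mathcal{U}_1$, $\omega_k(A)\in\mathcal{V}_1$ and likewise for $B$, so $\overline{\mathbb{F}}$ is weak mixing.

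For the converse I would first isolate the clean transfer step: if $\overline{\mathbb{F}}$ is weakly mixing of order $j$, then so is $\mathbb{F}$. Given $U_1,\dots,U_j,V_1,\dots,V_j\in\beta$, the hyperspace sets $U_i^+,V_i^+$ lie in $\Delta$ and are non-empty, so applying order-$j$ weak mixing of $\overline{\mathbb{F}}$ to the families $\{U_i^+\}$ and $\{V_i^+\}$ yields one $n$ and sets $A_i\subseteq U_i$ with $\omega_n(A_i)\subseteq V_i$; any $a_i\in A_i$ then satisfies $a_i\in U_i$ and $\omega_n(a_i)\in V_i$, i.e. $\omega_n(U_i)\cap V_i\neq\phi$. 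This step needs neither commutativity nor finiteness of the members, only the availability of the upper sets $U^+$. Consequently the whole converse reduces to upgrading the hypothesis ``$\overline{\mathbb{F}}$ is weakly mixing (order $2$)'' to ``$\overline{\mathbb{F}}$ is weakly mixing of every order,'' after which the transfer step applied to each $k$ finishes the argument.

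This last upgrade is where I expect the real work, and where commutativity enters. Since the $f_n$ commute, so do the induced maps $\overline{f_n}$, so $\overline{\mathbb{F}}$ is again a commutative non-autonomous system, and the goal becomes the Furstenberg-type statement that a commutative weakly mixing non-autonomous system is weakly mixing of all orders. I would attempt this by the product induction that shows weak mixing of order $k$ forces transitivity of the $(k+1)$-fold product system, rather than by the classical chaining of return times: in a non-autonomous system $\omega_{m+n}\neq\omega_m\circ\omega_n$, so the semigroup identity that powers the autonomous proof breaks down, and the commuting structure of the family is precisely what I would use to keep the iterated product systems well-behaved and to carry the induction through. Making this induction go through without the composition identity is the main obstacle and the step I expect to require the most care; once it is in place, the transfer step of the previous paragraph delivers weak mixing of all orders for $\mathbb{F}$.
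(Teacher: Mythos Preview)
Your proposal is correct and follows essentially the same route as the paper: the forward direction reduces the hyperspace basic open sets to finitely many hit-sets-inside-miss-complements and invokes high-order weak mixing of $\mathbb{F}$ to find a single time $k$, and the converse pulls back order-$m$ weak mixing through the sets $U^+$ once the hyperspace system is known to be weakly mixing of all orders. The one difference is that the Furstenberg-type upgrade you flag as the main obstacle (commutative $\overline{\mathbb{F}}$ weakly mixing $\Rightarrow$ weakly mixing of all orders) is not proved in the paper at all but simply cited from \cite{pm}; so the step you plan to work hardest on is exactly the step the paper outsources.
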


\begin{proof}
Let the topology $\Delta$ on the hyperspace be determined by the collection $\mathcal{C}$ and let $\mathcal{U}_1$, $\mathcal{U}_2$, $\mathcal{V}_1$,
$\mathcal{V}_2$ be non-empty open sets in the hyperspace. Let $W_{11}, W_{21}, \ldots W_{n_1 1}$; $W_{12},
W_{22}, \ldots W_{r_1 2}$; $R_{11}, R_{21}, \ldots R_{n_1 1}$; $R_{12}, R_{22}, \ldots R_{r_1 2}$ define the collection of hit sets and $T_{11}, T_{21}, \ldots T_{m_1 1}$; $T_{12}, T_{22}, \ldots T_{s_1 2}$; $S_{11}, S_{21}, \ldots S_{m_1 1}$; $S_{12}, S_{22}, \ldots S_{s_1 2}$ define the collection of miss sets for $\mathcal{U}_1$, $\mathcal{U}_2$, $\mathcal{V}_1$, $\mathcal{V}_2$ respectively. Let $ T_i = \bigcup \limits_{j} T_{j i}$, $ S_i = \bigcup \limits_{j} S_{j i}$, $P_{j}^{i} = W_{j i} \bigcap T_i ^c$ and $Q_{j}^{i} = R_{j
i} \bigcap S_i ^c$. As $\mathcal{U}_1$, $\mathcal{U}_2$, $\mathcal{V}_1$, $\mathcal{V}_2$ are non-empty, each of $P_{j}^{i}, Q_{j}^{i}$ are non-empty open sets. Further as $\mathbb{F}$ is weakly mixing of all orders, there exists $k \in \mathbb{N}$ such that $\omega_k
(P_{j}^{i}) \bigcap Q_{j}^{i} \neq \phi$,  $\forall ~ i, j$. Choose $x_{j}^{i} \in P_{j}^{i}$ such that $ \omega_k(x_{j}^{i}) \in Q_{j}^{i}$.
Then $ A_i = \{ x_{j}^{i}\}_ j \in \mathcal{U}_{i}$ such that $\overline{\omega_k}(A_i) \in \mathcal{V}_{i}$. Hence $\overline{\mathbb{F}}$ is
weakly mixing.

Conversely, let $U_1, U_2,\ldots,U_m$ and $V_1, V_2,\ldots, V_m$ be non-empty open sets in $X$.  As $\beta$ is
the base for the topology on $X$, $\exists$ $U_{11},U_{22},\ldots,U_m$ and  $V_{11},V_{22},\ldots,V_{mm} \in \beta $ such that $U_{ii} \subseteq U_i$ and $V_{ii}\subseteq V_i$ for $i=1,2,\ldots,m$. As $f_n$'s commute in the original system, $\overline{f_n}$ commutes on the hyperspace and hence by \cite{pm} the induced system is $n$-transitive for any $n\in\mathbb{N}$. Hence for open sets $\{U_{ii}^+:i=1,2,\ldots,m\}$ and $\{V_{ii}^+: i=1,2,\ldots,m\}$, there exists $k \in \mathbb{N}$ such that $\overline{\omega}_k (U_{ii} ^+) \bigcap V_{ii} ^+ \neq \phi$ for $i=1,2,\ldots,m$  which implies $\omega_k(U_{ii})\cap V_{ii}\neq\phi$ and the proof is complete.
\end{proof}

\begin{Remark}
The above proof establishes that for a commutative family $\mathbb{F}$, weak mixing on the hyperspace is equivalent to weak mixing of all orders in the original system. It may be noted that the forward part of the proof does not require commutativity of the family $\mathbb{F}$ and arguments similar to the converse establish weak mixing of order $m$ in the original system from weak mixing of order $m$ of the induced system without using the commutativity of $\mathbb{F}$. Thus the result more generally establishes that the original system is weak mixing of all orders if and only if the induced system is weak mixing of all orders. Further, as weak mixing of all orders is equivalent to weak mixing of second order for a commutative family $\mathbb{F}$, the result establishes equivalence of weak mixing for the two systems when the non-autonomous system is induced by a commutative family. Hence we get the following corollaries.
\end{Remark}

\begin{Cor} Let $\mathcal{F} (X) \subseteq \Psi.$ If there exists a base $\beta$ for topology on $X$ such that $U^+ \in \Delta$ for every $U \in \beta$, then $(X,\mathbb{F})$ is weakly mixing of all orders if and only if $(\Psi,\overline{\mathbb{F}})$ is weak mixing of all orders.
\end{Cor}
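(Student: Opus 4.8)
The plan is to derive both implications from the two halves of the preceding Proposition, the guiding observation (recorded in the Remark) being that once the orders of weak mixing are matched on the two sides, no commutativity hypothesis is needed.

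For the forward implication, I would assume $(X,\mathbb{F})$ is weakly mixing of all orders and fix an order $k$. Given non-empty open sets $\mathcal{U}_1,\ldots,\mathcal{U}_k$ and $\mathcal{V}_1,\ldots,\mathcal{V}_k$ in $\Psi$, I would repeat the construction of the forward part of the Proposition verbatim, only now for $k$ pairs rather than two: express each $\mathcal{U}_i$ (resp. $\mathcal{V}_i$) through its finitely many hit sets and miss sets, and intersect each hit set with the complement of the union of the corresponding miss sets to produce non-empty open sets $P^i_j$ and $Q^i_j$ in $X$. Let $N$ be the total number of such pairs $(P^i_j,Q^i_j)$; since $\mathbb{F}$ is weakly mixing of all orders it is in particular weakly mixing of order $N$, so there is a single $n\in\mathbb{N}$ with $\omega_n(P^i_j)\cap Q^i_j\neq\phi$ for every $i,j$. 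Selecting $x^i_j\in P^i_j$ with $\omega_n(x^i_j)\in Q^i_j$ and setting $A_i=\{x^i_j\}_j\in\mathcal{F}(X)\subseteq\Psi$ gives $A_i\in\mathcal{U}_i$ and $\overline{\omega}_n(A_i)\in\mathcal{V}_i$ for all $i$, so $\overline{\mathbb{F}}$ is weakly mixing of order $k$. As $k$ was arbitrary, $(\Psi,\overline{\mathbb{F}})$ is weakly mixing of all orders.

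For the converse, I would assume $(\Psi,\overline{\mathbb{F}})$ is weakly mixing of all orders and fix $m$. Given non-empty open $U_1,\ldots,U_m$ and $V_1,\ldots,V_m$ in $X$, use the base $\beta$ to choose $U_{ii},V_{ii}\in\beta$ with $U_{ii}\subseteq U_i$ and $V_{ii}\subseteq V_i$; by hypothesis each $U_{ii}^+,V_{ii}^+$ is a non-empty element of $\Delta$. Applying weak mixing of order $m$ of $\overline{\mathbb{F}}$ directly to the $m$ pairs $(U_{ii}^+,V_{ii}^+)$ yields a single $n$ with $\overline{\omega}_n(U_{ii}^+)\cap V_{ii}^+\neq\phi$ for all $i$, whence $\omega_n(U_{ii})\cap V_{ii}\neq\phi$ and therefore $\omega_n(U_i)\cap V_i\neq\phi$ for all $i$. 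Thus $(X,\mathbb{F})$ is weakly mixing of order $m$, and letting $m$ vary completes the argument. In contrast to the Proposition, commutativity is never invoked here: we do not pass through the $n$-transitivity result of \cite{pm}, but simply feed the $m$ base pairs into the order-$m$ mixing of the induced system.

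The step that carries the weight is the forward direction, and specifically the bookkeeping that the number $N$ of refined pairs $(P^i_j,Q^i_j)$ is not $k$ but the total count of hit sets across $\mathcal{U}_1,\ldots,\mathcal{U}_k$, which is finite yet unbounded as $k$ varies. This is precisely why the hypothesis must be weak mixing of \emph{all} orders rather than of some fixed order: a basic open set in an arbitrary admissible hyperspace topology may hit arbitrarily many open sets, so the order demanded of $\mathbb{F}$ cannot be bounded in advance. Everything else is the routine reduction already used in the Proposition, together with the admissibility of $\Psi$ ensuring that the finite sets $A_i$ and their images $\overline{\omega}_n(A_i)=\omega_n(A_i)$ remain in $\Psi$.
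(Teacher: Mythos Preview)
Your proposal is correct and follows precisely the route the paper itself takes: the Corollary is obtained in the Remark by observing that the forward half of the Proposition extends verbatim from two pairs to $k$ pairs (using weak mixing of a sufficiently high order on $X$), while the converse is obtained by feeding the $m$ pairs $(U_{ii}^+,V_{ii}^+)$ directly into weak mixing of order $m$ on the hyperspace, so that the appeal to commutativity and to \cite{pm} becomes unnecessary. Your write-up makes these two adaptations explicit, including the point that the needed order $N$ in the forward direction is the total number of hit sets rather than $k$, which the paper leaves implicit.
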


\begin{Cor} Let $\mathcal{F} (X) \subseteq \Psi.$ If $\mathbb{F}$ is commutative and there exists a base $\beta$ for topology on $X$ such that $U^+ \in \Delta$ for every $U \in \beta$, then $(X,\mathbb{F})$ is weakly mixing if and only if $(\Psi,\overline{\mathbb{F}})$ is weak mixing.
\end{Cor}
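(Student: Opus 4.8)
The plan is to present the corollary as a two-link chain of biconditionals, with essentially all of the machinery already supplied by the weak-mixing proposition above. Writing $\mathrm{WM}$ for weak mixing and $\mathrm{WM}_\ast$ for weak mixing of all orders, the chain I would run is
\[
(X,\mathbb{F})\ \mathrm{WM}\iff (X,\mathbb{F})\ \mathrm{WM}_\ast\iff (\Psi,\overline{\mathbb{F}})\ \mathrm{WM},
\]
so the whole task reduces to justifying these two equivalences under the standing hypotheses that $\mathcal{F}(X)\subseteq\Psi$, that $\mathbb{F}$ is commutative, and that there is a base $\beta$ with $U^+\in\Delta$ for every $U\in\beta$. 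The point of the corollary is precisely to trade the clumsy phrase ``weakly mixing of all orders'' on the base space for plain ``weak mixing.''

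The second equivalence is exactly the content of the preceding proposition. Its first half gives $(X,\mathbb{F})\ \mathrm{WM}_\ast\Rightarrow(\Psi,\overline{\mathbb{F}})\ \mathrm{WM}$ and uses no commutativity, while its second half, which is where the hypotheses of commutativity and of the base $\beta$ are consumed, gives the reverse implication $(\Psi,\overline{\mathbb{F}})\ \mathrm{WM}\Rightarrow(X,\mathbb{F})\ \mathrm{WM}_\ast$. For this link I would simply cite the proposition in both directions. The first equivalence is trivial in one direction, since weak mixing of all orders includes order two and hence implies weak mixing; the only substantive direction here is $\mathrm{WM}\Rightarrow\mathrm{WM}_\ast$.

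The main obstacle, and the only step requiring genuine argument, is therefore the commutative Furstenberg equivalence $\mathrm{WM}\Rightarrow\mathrm{WM}_\ast$ for $(X,\mathbb{F})$, which the remark preceding the corollary records as known. To establish it I would adapt Furstenberg's intersection argument to the non-autonomous setting: starting from transitivity of the product system $\mathbb{F}\times\mathbb{F}$, one bootstraps by induction on $k$ to transitivity of $\mathbb{F}\times\cdots\times\mathbb{F}$ ($k$ factors), at each stage intersecting the return-time sets $N(U,V)=\{n:\omega_n(U)\cap V\neq\emptyset\}$ of the individual coordinates to locate a common time $k$. Commutativity is exactly what makes this induction legitimate outside the autonomous world: it permits the composite maps to be rearranged, so that $\omega_n^{(k)}=\omega_n\times\cdots\times\omega_n$ behaves as a single transformation of the product and the return times recombine across time-blocks as they would for an autonomous map. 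I would expect this equivalence to be quotable directly from the reference invoked in the proposition, after which the corollary collapses to the three-line chain displayed above.
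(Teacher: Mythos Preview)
Your proposal is correct and mirrors the paper's own derivation: the corollary is deduced from the preceding proposition via exactly the chain $(X,\mathbb{F})\ \mathrm{WM}\iff (X,\mathbb{F})\ \mathrm{WM}_\ast\iff (\Psi,\overline{\mathbb{F}})\ \mathrm{WM}$, with the first equivalence supplied by the commutative Furstenberg fact (which the paper records in the remark and attributes to the cited reference) and the second by the proposition itself. There is no difference in approach.
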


\begin{Remark}
For the results derived so far, it may be noted that the proofs establish analogous extensions of results known for the autonomous systems. Moreover, it is observed that not only the statements but arguments similar to the autonomous case hold good in the non-autonomous case and thus give rise to the analogous extensions. This happens due to the fact that many of the proofs for the autonomous case do not use the fact that the governing rule is constant with respect to time and hence similar techniques or methodology can be used to derive the results for the non-autonomous case. For example the proof establishing the equivalence of topological mixing for the two systems $(X,f)$ and $(\Psi,\overline{f})$ uses the fact that if every pair $(U,V)$ of non-empty open sets interacts at times $n\geq n_{U,V}$, then open sets $U_1,U_2,\ldots,U_n$ and $V_1,V_2,\ldots,V_m$ also interact for $n\geq K$ where $K=\max\{n_{U_i,V_j}: 1\leq i\leq n, 1\leq j\leq m\}$ and hence a similar technique yields equivalence of topological mixing for the two systems in the non-autonomous case. Similarly, if for each non-empty open set $U$ there exists $n_U\in\mathbb{N}$ such that diam$\omega_n(U)\geq\delta,~~\forall n\geq n_U$ then for any finite collection of non-empty open sets $U_1,U_2,\ldots,U_m$ there exists $K=\max\{n_{U_i}: i=1,2,\ldots,m\}$ such that diam$(\omega_n(U_i))\geq\delta,~~\forall n\geq K$ and $i=1,2,\ldots,m$ and hence a proof similar to the autonomous case yields equivalence of strong sensitivity for the two systems in the non-autonomous case(when the hyperspace $\mathcal{K}(X)$ is equipped with the Hausdorff metric). Such arguments does not use the fact that the governing rule is constant with time and hence also hold good for the non-autonomous systems. Similar observations can be made about the proofs involving properties like sensitivity, equicontinuity and Li-Yorke chaoticity. We now state the results for the non-autonomous systems whose autonomous version do not use constancy of $f$ and hence are trivial extensions of their autonomous versions.
\end{Remark}

\begin{p} Let $\mathcal{F} (X) \subseteq \Psi.$ If $(X,\mathbb{F})$ is topologically mixing, then so is $(\Psi, \overline{\mathbb{F}})$. The converse holds if there exists a base $\beta$ for topology on $X$ such that $U^+ \in \Delta$ for every $U \in \beta$.
\end{p}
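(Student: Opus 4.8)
The plan is to mirror the structure of the proofs of Propositions 1 and 2, using the observation (flagged in the preceding Remark) that finitely many open pairs can be made to interact simultaneously from some common time onward simply by taking a maximum over the relevant thresholds.

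For the forward implication, I would fix two non-empty basic open sets $\mathcal{U}, \mathcal{V}$ in the hyperspace. Writing $\Delta$ in its hit-and-miss (or hit-and-far-miss) form, $\mathcal{U}$ hits finitely many open sets $W_1, \ldots, W_p$ and misses (far misses) finitely many members $T_1, \ldots, T_q$ of the determining collection, and likewise $\mathcal{V}$ hits $R_1, \ldots, R_s$ and misses $S_1, \ldots, S_t$. Setting $T = \bigcup_j T_j$, $S = \bigcup_j S_j$, $P_i = W_i \cap T^c$ and $Q_i = R_i \cap S^c$, each $P_i, Q_i$ is a non-empty open subset of $X$, and membership in $\mathcal{U}$ (resp. $\mathcal{V}$) is equivalent to lying in $T^c$ and hitting every $P_i$ (resp. lying in $S^c$ and hitting every $Q_i$). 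Since $(X,\mathbb{F})$ is topologically mixing, for each pair $(i,j)$ there is a threshold $K_{ij}$ with $\omega_n(P_i) \cap Q_j \neq \emptyset$ for all $n \geq K_{ij}$; I would then put $K = \max_{i,j} K_{ij}$.

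The key step is to build, for each fixed $n \geq K$, a single finite seed $A$ lying in $\mathcal{U}$ whose image lies in $\mathcal{V}$. For each index $i$ I would pick $a_i \in P_i$ with $\omega_n(a_i) \in Q_1$, and for each index $j$ pick $b_j \in P_1$ with $\omega_n(b_j) \in Q_j$; this is possible precisely because $n \geq K$. Taking $A = \{a_i\}_i \cup \{b_j\}_j \in \mathcal{F}(X) \subseteq \Psi$, the points $a_i$ guarantee that $A$ hits every $P_i$, while the $b_j$ guarantee that $\omega_n(A)$ hits every $Q_j$; moreover every element of $A$ lies in some $P_i \subseteq T^c$, so $A \in \mathcal{U}$, and every element of $\omega_n(A)$ lies in some $Q_j \subseteq S^c$, so $\overline{\omega_n}(A) \in \mathcal{V}$. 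Finiteness of $A$ is exactly what makes the two ``miss'' constraints automatic — this is the point where I expect the main care to be needed, since one must verify that not just the distinguished points but \emph{every} element and \emph{every} image lands in the correct open set, and it is also where the hypothesis $\mathcal{F}(X) \subseteq \Psi$ is essential. As $n \geq K$ was arbitrary, $\overline{\omega_n}(\mathcal{U}) \cap \mathcal{V} \neq \emptyset$ for all $n \geq K$, so $(\Psi, \overline{\mathbb{F}})$ is topologically mixing.

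For the converse I would argue exactly as in Proposition 2. Given non-empty open $U, V \subseteq X$, choose $U_1, V_1 \in \beta$ with $U_1 \subseteq U$ and $V_1 \subseteq V$; then $U_1^+, V_1^+ \in \Delta$ are non-empty (they contain singletons, which are finite and hence in $\Psi$) open sets in the hyperspace. Topological mixing of $(\Psi, \overline{\mathbb{F}})$ furnishes a $K$ with $\overline{\omega_n}(U_1^+) \cap V_1^+ \neq \emptyset$ for all $n \geq K$, so for each such $n$ there is $A$ with $A \subseteq U_1$ and $\omega_n(A) \subseteq V_1$; picking any $a \in A$ gives $a \in U$ and $\omega_n(a) \in V$, whence $\omega_n(U) \cap V \neq \emptyset$ for all $n \geq K$. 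Thus $(X,\mathbb{F})$ is topologically mixing, completing the proof.
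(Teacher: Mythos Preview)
Your proposal is correct and follows essentially the same approach the paper indicates: the paper does not give a standalone proof of this proposition but instead, in the preceding Remark, sketches exactly the max-of-thresholds argument you carry out (together with the $U^+$ argument of Proposition~2 for the converse), declaring the result a trivial extension of the autonomous case. Your write-up simply fills in the details of that sketch, including the careful construction of the finite seed $A=\{a_i\}_i\cup\{b_j\}_j$ ensuring both the hit and miss constraints on $A$ and on $\omega_n(A)$.
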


\begin{p}
$(\mathcal{K}(X),\overline{\mathbb{F}})$ is sensitive $\Rightarrow$ $(X,\mathbb{F})$ is
sensitive.
\end{p}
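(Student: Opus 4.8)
The plan is to push sensitivity down from the hyperspace to the base space by exploiting the fact that the singleton embedding $x \mapsto \{x\}$ is an isometry of $(X,d)$ into $(\mathcal{K}(X), d_H)$, i.e. $d_H(\{x\},\{y\}) = d(x,y)$. Let $\delta > 0$ be a sensitivity constant for $(\mathcal{K}(X),\overline{\mathbb{F}})$; I claim the \emph{same} $\delta$ witnesses sensitivity of $(X,\mathbb{F})$. So I would fix an arbitrary $x \in X$ and an arbitrary neighborhood $U$ of $x$, choose $\epsilon > 0$ with the open ball $B(x,\epsilon) \subseteq U$, and form the Hausdorff $\epsilon$-ball $\mathcal{B} = \{A \in \mathcal{K}(X) : d_H(\{x\},A) < \epsilon\}$, which is an open neighborhood of the hyperspace point $\{x\}$.

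The first substantive step is to observe that $\mathcal{B}$ sits inside $U$ at the level of sets: for every $A \in \mathcal{B}$ and every $a \in A$ we have $d(a,x) \le d_H(A,\{x\}) < \epsilon$, so $A \subseteq B(x,\epsilon) \subseteq U$. Applying sensitivity of the induced system to the point $\{x\}$ and its neighborhood $\mathcal{B}$, I obtain an $n \in \mathbb{N}$ with $diam(\overline{\omega_n}(\mathcal{B})) > \delta$, and since the diameter is the supremum of $d_H$ over pairs, there exist $A, B \in \mathcal{B}$ with $d_H(\omega_n(A), \omega_n(B)) > \delta$. Because $A,B \subseteq U$, both images satisfy $\omega_n(A), \omega_n(B) \subseteq \omega_n(U)$.

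The final step is to descend from this separation of \emph{sets} to a separation of \emph{points} inside $\omega_n(U)$. Pick a level $\delta'$ with $\delta < \delta' < d_H(\omega_n(A),\omega_n(B))$; then $\delta'$ lies below the infimum defining $d_H$, so one of the two inclusions fails, say $\omega_n(A) \not\subseteq S_{\delta'}(\omega_n(B))$. Hence there is a point $p \in \omega_n(A)$ with $d(p,\omega_n(B)) \ge \delta'$, and for any $q \in \omega_n(B)$ we get $p,q \in \omega_n(U)$ with $d(p,q) \ge \delta' > \delta$. Therefore $diam(\omega_n(U)) > \delta$, and as $x$ and $U$ were arbitrary, $(X,\mathbb{F})$ is sensitive with constant $\delta$.

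I do not expect a serious obstacle: this is the easy direction, paralleling the autonomous case, and nothing in the argument uses time-invariance of the governing rule, consistent with the preceding remark. The only point requiring care is this last step—correctly unpacking the definition of $d_H$ to extract an honest pair of points realizing separation \emph{strictly} greater than $\delta$ (which is why I pass to the intermediate level $\delta'$ rather than working at $\delta$ directly)—together with the routine verifications that each $A \in \mathcal{B}$ and each image $\omega_n(A)$ genuinely lies in $\mathcal{K}(X)$, which is automatic since compact subsets of $X$ are hyperspace points and each $\omega_n$ is continuous.
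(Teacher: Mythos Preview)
Your argument is correct and is precisely the standard autonomous-case proof carried over verbatim, which is exactly what the paper does: the proposition is stated without a written proof, as one of several results the authors explicitly flag as ``trivial extensions of their autonomous versions'' (citing \cite{pa1,pa2}), on the grounds that the original argument nowhere uses constancy of the governing map. Your detailed write-up simply fills in those omitted details, so there is no substantive difference in approach.
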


\begin{p} \label{ss1}
$(\mathcal{K}(X),\overline{\mathbb{F}})$ is strongly sensitive $\Leftrightarrow$
$(X,\mathbb{F})$ is strongly sensitive.
\end{p}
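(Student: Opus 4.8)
The plan is to prove the two implications separately, working throughout with the Hausdorff metric $d_H$ on $\mathcal{K}(X)$. First I would record the uniformization observation of the preceding remark, sharpened by compactness of $X$: covering $X$ by finitely many balls of radius $\epsilon/2$ and applying the remark to this finite family, strong sensitivity of $(X,\mathbb{F})$ with constant $\delta$ yields a single $K$ such that $diam(\omega_n(B(y,\epsilon)))>\delta$ for every $y\in X$ and every $n\geq K$. This is precisely what promotes pointwise cofinite expansion to the simultaneous expansion needed on the hyperspace.

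For the direction $(\mathcal{K}(X),\overline{\mathbb{F}})$ strongly sensitive $\Rightarrow(X,\mathbb{F})$ strongly sensitive, I would mimic the proof of the preceding proposition, now tracking the cofinite quantifier. Fix the hyperspace constant $c$, a point $x\in X$, and a neighbourhood $U$ of $x$; choose $\eta>0$ with $B(x,\eta)\subseteq U$ and observe that $\mathcal{U}=\{B\in\mathcal{K}(X):B\subseteq B(x,\eta)\}=B_{d_H}(\{x\},\eta)$ is a neighbourhood of the singleton $\{x\}$. Strong sensitivity of $\overline{\mathbb{F}}$ gives $K$ with $diam_{d_H}(\overline{\omega}_n(\mathcal{U}))>c$ for all $n\geq K$; for each such $n$ pick $B_1,B_2\in\mathcal{U}$ whose images are $d_H$-separated by more than $c$, extract a point $b\in B_1$ realizing the separation, and pair it with any $b'\in B_2$. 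Since $b,b'\in B(x,\eta)\subseteq U$ and $d(\omega_n(b),\omega_n(b'))>c$, we obtain $diam(\omega_n(U))>c$ for all $n\geq K$. This direction then goes through with the same constant, the point being that $x\mapsto\{x\}$ is an isometric embedding so no information is lost.

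The substantive direction is $(X,\mathbb{F})$ strongly sensitive $\Rightarrow(\mathcal{K}(X),\overline{\mathbb{F}})$ strongly sensitive. Fix $\mathcal{A}\in\mathcal{K}(X)$, and after shrinking a general neighbourhood to a basic one assume $B_{d_H}(\mathcal{A},\epsilon)\subseteq\mathcal{U}$. For $n\geq K$ I want two compact sets in $\mathcal{U}$ whose $\overline{\omega}_n$-images are $d_H$-separated by $\delta/2$. The construction I would try is removal of a preimage ball: choose $p_n\in\omega_n(\mathcal{A})$, set $G_n=\omega_n^{-1}(B(p_n,\delta/2))$ and $\mathcal{B}_n=\mathcal{A}\setminus G_n$. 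By the uniform expansion every ball $B(y,\epsilon)$ satisfies $\omega_n(B(y,\epsilon))\not\subseteq B(p_n,\delta/2)$, so $G_n$ contains no $\epsilon$-ball; hence $X\setminus G_n$ is $\epsilon$-dense, while $\omega_n(\mathcal{B}_n)$ misses $B(p_n,\delta/2)$, which forces $d_H(\omega_n(\mathcal{A}),\omega_n(\mathcal{B}_n))\geq d(p_n,\omega_n(\mathcal{B}_n))\geq\delta/2$.

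The hard part, and the step I expect to absorb all the work, is showing that $\mathcal{B}_n$ genuinely lies in $\mathcal{U}$, i.e. that $\mathcal{A}\setminus G_n$ is $\epsilon$-dense \emph{relative to} $\mathcal{A}$ rather than merely relative to $X$. This is exactly the absorption obstruction: a perturbation whose orbit separates from $\omega_n$ of one point of $\mathcal{A}$ can still be swallowed by the image of a distant part of $\mathcal{A}$, so neither pure removal (which may delete an isolated piece of $\mathcal{A}$ and push $\mathcal{B}_n$ outside $B_{d_H}(\mathcal{A},\epsilon)$) nor pure point-addition $\mathcal{B}_n=\mathcal{A}\cup\{w_n\}$ (whose added image may lie within $\delta/2$ of the large set $\omega_n(\mathcal{A})$) succeeds uniformly in $\mathcal{A}$. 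The removal argument is clean when $\mathcal{A}$ is ``fat'' (for instance $\mathcal{A}=X$, where $\mathcal{B}_n=X\setminus G_n$ is automatically $\epsilon$-dense), whereas the dual addition argument is what handles the thin or isolated part of $\mathcal{A}$, since there two singleton-type perturbations can be compared with no background to absorb them. I therefore expect the real content to be a case split governed by how much of $\mathcal{A}$ meets the expanding ball $B(a,\epsilon)$ around a chosen $a\in\mathcal{A}$ — removal on the locally $\epsilon$-dense portions of $\mathcal{A}$, addition on the locally isolated portions — together with the verification that for every $n\geq K$ at least one of the two perturbations clears the $\delta/2$ threshold. Checking that cofinite (not merely plain) sensitivity is strong enough to force this dichotomy is the point on which the argument turns.
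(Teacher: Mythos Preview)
Your reverse direction (hyperspace $\Rightarrow$ base) is fine, and your uniformisation step is exactly the hinge the paper isolates in the remark preceding the proposition. The forward direction, however, has a genuine gap: you correctly name the absorption obstruction, but you do not resolve it. The case split you sketch --- removal on the ``fat'' part of $\mathcal{A}$, point-addition on the ``thin'' part --- is left as a hope, and neither branch as written closes. Removal of $G_n$ can excise an entire isolated piece of $\mathcal{A}$ (so $\mathcal{B}_n\notin B_{d_H}(\mathcal{A},\epsilon)$), while comparing $\mathcal{A}\cup\{w_n\}$ against $\mathcal{A}$ suffers precisely the absorption you describe; nothing in your outline explains why cofiniteness forces at least one of the two to succeed at \emph{every} $n\geq K$.

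The missing idea (this is the content of the autonomous argument the paper invokes from \cite{pa2}) is that one should not perturb $\mathcal{A}$ at all. Instead, work inside a Vietoris set $\langle U_1,\ldots,U_k\rangle\subseteq B_{d_H}(\mathcal{A},\epsilon)$ and build two \emph{finite} witnesses using the simultaneous expansion of \emph{all} the $U_i$. Fix $n\geq K$, pick any $u\in U_1$, and set $x_0=\omega_n(u)$. Since $\mathrm{diam}(\omega_n(U_i))>\delta$ for every $i$, the triangle inequality yields for each $i$ a point $y_i\in U_i$ with $d(\omega_n(y_i),x_0)>\delta/2$. Then $A=\{y_1,\ldots,y_k\}$ and $B=A\cup\{u\}$ both lie in $\langle U_1,\ldots,U_k\rangle$, and because $\omega_n(A)\subset\omega_n(B)$ with $d(\omega_n(u),\omega_n(A))>\delta/2$, one reads off $d_H(\overline{\omega}_n(A),\overline{\omega}_n(B))>\delta/2$ directly. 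Absorption never arises: the added point is chosen so that its image is $\delta/2$-far from \emph{every} point of $\omega_n(A)$, not merely from one of them. This is exactly where cofinite (as opposed to ordinary) sensitivity is used --- all $k$ covering balls must expand at the \emph{same} time $n$ for these $k$ avoidance choices to be made coherently, and that is what the uniform $K=\max\{n_{U_i}\}$ in the paper's remark delivers.
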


\begin{Remark}
The above results analogously relate the dynamical behavior of a non-autonomous system with its induced counterpart establishing the equivalence of topological mixing and strong sensitivity for the non-autonomous case. The results also establish that if the induced system is sensitive then the original system is also sensitive. However, for the converse it is known that there exist sensitive autonomous systems $(X,f)$ such that induced system is not sensitive\cite{pa2}. We now establish existence of a sensitive non-autonomous system $(X,\mathbb{F})$ such that the induced system is not sensitive.
\end{Remark}

\begin{ex}
Let $(X,f)$ be a sensitive autonomous systems  such that induced system is not sensitive and let $I$ be the identity operator on $X$. Let $\mathbb{F}=\{f,I,f,I,\ldots,f,I,\ldots\}$. As $\omega_{2k-1}(A)=f^k(A)$ for any $A\subset X$, sensitivity of $(X,f)$ implies sensitivity of $(X,\mathbb{F})$. Further as $(\mathcal{K}(X),\overline{f})$ is not sensitive, $(\mathcal{K}(X),\overline{\mathbb{F}})$ is not sensitive and hence there exist sensitive non-autonomous dynamical systems such that their induced counterparts are not sensitive.
\end{ex}


We now give some more results which appear as trivial analogous extensions of their autonomous counterpart.

\begin{p}
$(\mathcal{K}(X),\overline{\mathbb{F}})$ is Li-Yorke sensitive $\implies$
$(X,\mathbb{F})$ has chaotic dependence on initial conditions. Further, if
$(\mathcal{F}(X),\overline{\mathbb{F}})$ is Li-Yorke sensitive then $(X,\mathbb{F})$
is Li-Yorke sensitive.
\end{p}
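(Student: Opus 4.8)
The plan is to pass through the singleton embedding $\iota\colon x \mapsto \{x\}$, which is continuous since $\Delta$ is admissible and which is isometric in the sense $d_H(\{x\},\{y\}) = d(x,y)$; on orbits it intertwines the dynamics, $\overline{\omega_n}(\{x\}) = \{\omega_n(x)\}$. Fix $x \in X$ and a neighborhood $U$ of $x$, and choose $\epsilon>0$ with $B_\epsilon(x) \subseteq U$. Let $\mathcal{U}$ be the $\epsilon$-ball about $\{x\}$ in the Hausdorff metric; every $B \in \mathcal{U}$ then satisfies $B \subseteq S_\epsilon(\{x\}) = B_\epsilon(x) \subseteq U$. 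Applying Li-Yorke sensitivity of $(\mathcal{K}(X),\overline{\mathbb{F}})$ at the point $\{x\}$ to the neighborhood $\mathcal{U}$ produces a fixed $\delta>0$ and a compact set $B \in \mathcal{U}$, with $B \subseteq U$, for which $\{\{x\},B\}$ is $\delta$-scrambled. The computation I would lean on throughout is the elementary identity $d_H(\{\omega_n(x)\}, \omega_n(B)) = \sup_{b\in B} d(\omega_n(x),\omega_n(b))$.

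The $\liminf$ half transfers for free to individual points: since $\liminf_n \sup_{b\in B} d(\omega_n(x),\omega_n(b)) = 0$ and $d(\omega_n(x),\omega_n(b)) \le \sup_{b'\in B} d(\omega_n(x),\omega_n(b'))$, every $b \in B$ already satisfies $\liminf_n d(\omega_n(x),\omega_n(b)) = 0$. The difficulty lies entirely in the $\limsup$ half. From $\limsup_n \sup_{b\in B} d(\omega_n(x),\omega_n(b)) > \delta$ I would extract a subsequence $n_k$ and points $b_k \in B$ with $d(\omega_{n_k}(x),\omega_{n_k}(b_k)) > \delta$, then use compactness of $B$ to pass to a limit $b_k \to b^*$. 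The \textbf{main obstacle} is that the $\limsup$ over $n$ and the $\sup$ over $b$ do not commute, so a single point $y$ witnessing positive separation is not handed to us: transferring the lower bound from the moving points $b_k$ to the fixed limit $b^*$ means controlling $d(\omega_{n_k}(b^*),\omega_{n_k}(b_k))$ along the subsequence, which is exactly an equicontinuity-type estimate that the bare hypotheses do not supply. This is precisely why the conclusion available from the full hyperspace $\mathcal{K}(X)$ is only chaotic dependence on initial conditions, where one needs merely $\limsup_n d(\omega_n(x),\omega_n(y)) > 0$ rather than a uniform constant.

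For the second statement I would restrict to $\mathcal{F}(X)$, and here the obstacle evaporates. The scrambled partner is now a finite set $B = \{b_1,\ldots,b_r\} \subseteq U$, so $d_H(\{\omega_n(x)\},\omega_n(B)) = \max_{1\le i\le r} d(\omega_n(x),\omega_n(b_i))$ is a maximum over a fixed finite index set. Since $\limsup$ commutes with a finite maximum, $\max_i \limsup_n d(\omega_n(x),\omega_n(b_i)) = \limsup_n \max_i d(\omega_n(x),\omega_n(b_i)) > \delta$, so some index $i^*$ has $\limsup_n d(\omega_n(x),\omega_n(b_{i^*})) > \delta$; combined with the $\liminf$ observation above, which forces $\liminf_n d(\omega_n(x),\omega_n(b_{i^*})) = 0$, the pair $\{x,b_{i^*}\}$ is $\delta$-scrambled with $b_{i^*} \in U$. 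As $x$ and $U$ were arbitrary and $\delta$ is the fixed sensitivity constant, $(X,\mathbb{F})$ is Li-Yorke sensitive. I expect no step of this argument to invoke constancy of the $f_n$, in keeping with the paper's remark that the result is a trivial analogous extension of its autonomous form.
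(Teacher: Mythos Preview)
The paper does not actually prove this proposition; it is listed among results declared to be ``trivial analogous extensions'' of their autonomous counterparts, with the implicit reference to \cite{pa2}. Your treatment of the second assertion (the $\mathcal{F}(X)$ case) is correct and matches what that reference does: finiteness of $B$ lets the $\limsup$ commute with the finite maximum, isolating a single witness $b_{i^*}$, and nothing in the argument uses constancy of the maps.

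Your treatment of the first assertion, however, has a gap. You correctly obtain $\liminf_n d(\omega_n(x),\omega_n(b))=0$ for every $b\in B$, and you correctly identify the obstruction to transferring the $\delta$-separation from the moving points $b_k$ to their limit $b^*$. But you then conclude that this obstruction merely downgrades the conclusion from Li-Yorke sensitivity to chaotic dependence. That is not so: chaotic dependence still demands a \emph{single} point $y\in U$ with $\limsup_n d(\omega_n(x),\omega_n(y))>0$, and the obstruction you name blocks $\limsup>0$ for $b^*$ exactly as it blocks $\limsup>\delta$. Phrased as a contradiction argument, assuming $d(\omega_n(x),\omega_n(b))\to 0$ for every $b\in B$ gives only pointwise convergence of the continuous functions $g_n(b)=d(\omega_n(x),\omega_n(b))$ to $0$ on the compact set $B$; without a Dini-type monotonicity or an equicontinuity hypothesis this does not force $\sup_{b\in B} g_n(b)\to 0$, so no contradiction with $\limsup_n d_H(\{\omega_n(x)\},\omega_n(B))>\delta$ follows. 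As written, the compactness extraction does not produce the required witness $y$, and the passage from the hyperspace scrambled pair $\{\{x\},B\}$ to a proximal--separating pair $\{x,y\}$ in $X$ needs a further idea beyond what you have outlined.
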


\begin{p}
Let $X$ be a locally connected. Then, $(X,\mathbb{F})$ is sensitive
$\Rightarrow$ $(\mathcal{F}(X), \overline{\mathbb{F}})$ is pointwise
sensitive.
\end{p}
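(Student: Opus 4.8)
The plan is to exploit sensitivity of $(X,\mathbb{F})$ at a single point of a finite set, and to use local connectedness to convert the large diameter of an image into one point that is uniformly far from the \emph{entire} image of the set. Throughout I take $\mathcal{F}(X)$ with the Hausdorff metric $d_H$, and I read pointwise sensitivity at $A\in\mathcal{F}(X)$ as the existence of $\delta_A>0$ such that every neighborhood of $A$ contains a set $B$ with $d_H(\overline{\omega_n}(A),\overline{\omega_n}(B))>\delta_A$ for some $n$ (equivalently, $\mathrm{diam}(\overline{\omega_n}(\mathcal{U}))>\delta_A$).

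First I would fix $A=\{a_1,\ldots,a_m\}\in\mathcal{F}(X)$ and a neighborhood $\mathcal{U}$ of $A$; shrinking, I may assume $\mathcal{U}$ contains the $d_H$-ball of some radius $r>0$ about $A$. Using local connectedness I choose a connected open neighborhood $W$ of $a_1$ with $W\subseteq B(a_1,r)$. Then every set $B=(A\setminus\{a_1\})\cup\{w\}$ with $w\in W$ satisfies $d_H(A,B)<r$ and so lies in $\mathcal{U}$. Applying sensitivity (with constant $\delta$) to the point $a_1$ and the neighborhood $W$ produces $n\in\mathbb{N}$ with $\mathrm{diam}(\omega_n(W))>\delta$, and $\omega_n(W)$ is connected since $\omega_n$ is continuous and $W$ is connected.

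The crux is a purely metric lemma: a connected set $C$ covered by $m$ open balls of radius $c$ has $\mathrm{diam}(C)\le 2mc$. I would prove this by fixing $x\in C$, noting that $z\mapsto d(x,z)$ is $1$-Lipschitz and carries the connected set $C$ onto an interval of length $\sup_{z}d(x,z)$, while the image of each of the $m$ ball-intersections is a set of diameter less than $2c$; covering an interval by $m$ such pieces forces its length to be at most $2mc$. Taking $c=\delta/2m$, this bound would contradict $\mathrm{diam}(\omega_n(W))>\delta$ unless some $z=\omega_n(w)\in\omega_n(W)$ satisfies $d(\omega_n(w),\omega_n(a_i))\ge \delta/2m$ for every $i$, i.e. $d(\omega_n(w),\omega_n(A))\ge\delta/2m$.

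Finally, for the corresponding $B=(A\setminus\{a_1\})\cup\{w\}\in\mathcal{U}$, the point $\omega_n(w)\in\omega_n(B)$ witnesses $d_H(\omega_n(A),\omega_n(B))\ge d(\omega_n(w),\omega_n(A))\ge \delta/2m$, so pointwise sensitivity holds at $A$ with $\delta_A=\delta/2m$. I expect the main obstacle to be exactly this metric lemma and the role of connectedness: sensitivity yields two far-apart points in $\omega_n(W)$, but these need not be far from the remaining images $\omega_n(a_2),\ldots,\omega_n(a_m)$, so a naive perturbation of $A$ can collapse in the Hausdorff metric; connectedness of $\omega_n(W)$ is precisely what prevents a large-diameter image from hiding inside small balls around the finitely many points of $\omega_n(A)$. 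The dependence of $\delta_A=\delta/2m$ on $m=|A|$ also explains why one obtains only pointwise, rather than uniform, sensitivity on $\mathcal{F}(X)$.
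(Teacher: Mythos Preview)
Your argument is correct and is essentially the approach the paper has in mind: the paper does not give an explicit proof here but declares the result a direct carry-over of the autonomous argument in \cite{pa2}, which hinges on exactly the two ingredients you isolate---local connectedness forcing $\omega_n(W)$ to be connected, and the covering lemma that a connected set of diameter $>\delta$ cannot sit inside $m$ balls of radius $\delta/2m$, yielding the pointwise constant $\delta_A=\delta/2m$. Your write-up in fact supplies the details the paper omits, and your closing remark that the $m$-dependence of $\delta_A$ is precisely why only pointwise (not uniform) sensitivity is obtained matches the intent of the original result.
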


\begin{p}
Let $\mathcal{F}(X) \subseteq \Psi \subseteq \mathcal{K} (X)$.
$(X,\mathbb{F})$ is equicontinuous $\Rightarrow$ $(\Psi, \overline{\mathbb{F}})$ is
almost equicontinuous.
\end{p}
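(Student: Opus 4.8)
The plan is to derive a strong form of equicontinuity on the hyperspace directly from the equicontinuity of $(X,\mathbb{F})$ and then observe that it yields the stated conclusion a fortiori. Since equicontinuity is a metric (uniform) notion and $\Psi\subseteq\mathcal{K}(X)$ with $X$ compact, I would measure distances on $\Psi$ with the Hausdorff metric $d_H$, which is the natural uniformity for a hyperspace of compact sets. The family under consideration is $\{\omega_n:n\in\mathbb{N}\}$, and the induced maps act by $\overline{\omega_n}(A)=\omega_n(A)$, so the whole argument reduces to a transfer estimate for $d_H$ that is uniform in $n$.

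First I would upgrade pointwise equicontinuity to uniform equicontinuity, which is where compactness of $X$ is essential. Given $\epsilon>0$, for each $x\in X$ choose, by equicontinuity at $x$, a radius $\delta_x>0$ with $d(x,y)<\delta_x\Rightarrow d(\omega_n(x),\omega_n(y))<\epsilon/2$ for all $n$. Cover $X$ by the open balls $B(x,\delta_x/2)$, extract a finite subcover, and let $\delta$ be the minimum of the corresponding radii $\delta_{x_i}/2$. A routine triangle-inequality argument then produces a single $\delta>0$ with $d(x,y)<\delta\Rightarrow d(\omega_n(x),\omega_n(y))<\epsilon$ for all $n$ and all $x,y\in X$; that is, $(X,\mathbb{F})$ is uniformly equicontinuous.

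Next I would transfer this modulus to $\Psi$. Fix $\epsilon>0$ and take $\delta>0$ as above. I claim that $d_H(A,B)<\delta$ forces $d_H(\overline{\omega_n}(A),\overline{\omega_n}(B))\le\epsilon$ for every $n$. Indeed, $d_H(A,B)<\delta$ gives $A\subseteq S_\delta(B)$, so each $a\in A$ has some $b\in B$ with $d(a,b)<\delta$, whence $d(\omega_n(a),\omega_n(b))<\epsilon$ and $\omega_n(a)\in S_\epsilon(\omega_n(B))$; thus $\omega_n(A)\subseteq S_\epsilon(\omega_n(B))$. The symmetric inclusion $\omega_n(B)\subseteq S_\epsilon(\omega_n(A))$ is identical, and together they give the Hausdorff estimate uniformly in $n$. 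Hence $\{\overline{\omega_n}\}$ is uniformly equicontinuous on $\Psi$, so every point of $\Psi$ is an equicontinuity point; the set of equicontinuity points is therefore all of $\Psi$, in particular dense (indeed residual), and $(\Psi,\overline{\mathbb{F}})$ is almost equicontinuous.

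I expect the only genuinely delicate point to be the first step, the passage from pointwise to uniform equicontinuity: it is exactly here that compactness of $X$ is used, and without it the Hausdorff estimate of the second step could not be made to hold with a single $\delta$ independent of the base point. The transfer to the hyperspace and the concluding implication are then routine, and it is worth remarking that the argument actually delivers full (uniform) equicontinuity of $(\Psi,\overline{\mathbb{F}})$, a strictly stronger conclusion than the stated almost equicontinuity.
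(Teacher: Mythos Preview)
Your argument is correct. The paper does not give an explicit proof of this proposition; it merely lists it among results that are ``trivial extensions of their autonomous versions,'' so there is no line-by-line comparison to make. That said, the shape of the statement in the paper points to a different intended argument than yours.

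Notice that the paper's hypothesis $\mathcal{F}(X)\subseteq\Psi$ plays no role whatsoever in your proof, and the paper claims only \emph{almost} equicontinuity while you obtain full (uniform) equicontinuity. The proof the paper has in mind almost certainly runs as follows: for a finite set $A=\{a_1,\dots,a_k\}$, pick $\delta_i$ from equicontinuity at each $a_i$, set $\delta=\min_i\delta_i$, and check directly that $d_H(A,B)<\delta$ forces $d_H(\overline{\omega_n}(A),\overline{\omega_n}(B))<\epsilon$ for all $n$; hence every finite set is an equicontinuity point, and since $\mathcal{F}(X)$ is dense in $\Psi\subseteq\mathcal{K}(X)$ one gets almost equicontinuity. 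This explains both the hypothesis and the weaker conclusion.

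Your route is genuinely different: you first exploit compactness of $X$ to upgrade pointwise equicontinuity to uniform equicontinuity, and then transfer a global modulus to the Hausdorff metric. This buys you a strictly stronger conclusion (indeed, it essentially proves one direction of the paper's later Proposition on uniform equicontinuity together with the compactness upgrade), and it dispenses with the assumption $\mathcal{F}(X)\subseteq\Psi$. The paper's approach, by contrast, would survive in settings where $X$ is not compact, at the cost of only locating a dense set of equicontinuity points rather than all of them.
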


\begin{p}
Let $\mathcal{F}_1(X) \subseteq \Psi$. Then, $(\Psi, \overline{\mathbb{F}})$
is equicontinuous $\Rightarrow$ $(X,\mathbb{F})$ is equicontinuous.
\end{p}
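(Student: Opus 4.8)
The plan is to exploit the fact that the hypothesis $\mathcal{F}_1(X)\subseteq\Psi$ places every singleton $\{x\}$ inside the hyperspace, and that the map $x\mapsto\{x\}$ is an \emph{isometric} embedding of $(X,d)$ into $(\Psi,d_H)$. Indeed, the paper has already recorded that the Hausdorff metric preserves the original metric on singletons, i.e. $d_H(\{x\},\{y\})=d(x,y)$, and by construction of the induced system $\overline{\omega}_n(\{x\})=\{\omega_n(x)\}$ for every $n$. Thus equicontinuity of $(X,\mathbb{F})$ at a point $x$ is nothing more than equicontinuity of $(\Psi,\overline{\mathbb{F}})$ at the corresponding point $\{x\}$, read through this isometry. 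Since $(\Psi,\overline{\mathbb{F}})$ is assumed equicontinuous at every point of $\Psi$, it is in particular equicontinuous at each singleton, and the conclusion should follow directly.

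Concretely, first I would fix an arbitrary $x\in X$ and an arbitrary $\epsilon>0$, and consider the point $\{x\}\in\Psi$, which lies in $\Psi$ precisely because $\mathcal{F}_1(X)\subseteq\Psi$. Applying equicontinuity of $(\Psi,\overline{\mathbb{F}})$ at $\{x\}$ yields a $\delta>0$ such that $d_H(\{x\},B)<\delta$ forces $d_H(\overline{\omega}_n(\{x\}),\overline{\omega}_n(B))<\epsilon$ for all $n\in\mathbb{N}$ and all $B\in\Psi$. Then for any $y\in X$ with $d(x,y)<\delta$, I would take $B=\{y\}\in\Psi$; since $d_H(\{x\},\{y\})=d(x,y)<\delta$, the chosen $\delta$ gives $d_H(\{\omega_n(x)\},\{\omega_n(y)\})<\epsilon$, and because $d_H(\{\omega_n(x)\},\{\omega_n(y)\})=d(\omega_n(x),\omega_n(y))$ this is exactly $d(\omega_n(x),\omega_n(y))<\epsilon$ for all $n$. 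As $x$ was arbitrary, $(X,\mathbb{F})$ is equicontinuous at every point, hence equicontinuous.

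This argument is essentially a verification that the defining inequalities transport across the singleton embedding, so there is no substantial obstacle; it is, as the surrounding remarks indicate, a trivial analogue of the autonomous case whose proof never invoked constancy of the maps. The only points requiring a line of care are the two structural identities $d_H(\{x\},\{y\})=d(x,y)$ and $\overline{\omega}_n(\{x\})=\{\omega_n(x)\}$, together with the observation that the single $\delta$ extracted at $\{x\}$ is uniform over all $n$ and over all admissible test points $B$, so in particular over all singletons $\{y\}$. It is worth remarking that the hypothesis $\mathcal{F}_1(X)\subseteq\Psi$ is exactly what is needed to guarantee that the test sets $\{y\}$ are legitimate points of the hyperspace, and that the result is stated with respect to the Hausdorff metric on $\Psi$, under which the singleton map is an isometry; no stronger property of the hyperspace topology is required.
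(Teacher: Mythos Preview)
Your proposal is correct and is precisely the argument the paper has in mind: the proposition is listed among those results that are ``trivial extensions of their autonomous versions,'' and the autonomous proof proceeds exactly via the isometric singleton embedding $x\mapsto\{x\}$ together with $\overline{\omega}_n(\{x\})=\{\omega_n(x)\}$. The paper gives no separate proof, so your write-up faithfully supplies the expected details.
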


\begin{p}
Let $\mathcal{F}(X) \subseteq \Psi \subseteq \mathcal{K} (X)$.
$(X,\mathbb{F})$ is uniformly equicontinuous if and only if $(\Psi,
\overline{\mathbb{F}})$ is uniformly equicontinuous.
\end{p}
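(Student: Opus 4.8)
The plan is to recognize that this is purely a metric statement, with $\Psi \subseteq \mathcal{K}(X)$ carrying the Hausdorff metric $d_H$, and to prove the two implications separately. Two facts drive everything. First, the induced map acts by $\overline{\omega_n}(A) = \omega_n(A) = \{\omega_n(a) : a \in A\}$, so controlling the dynamics on the hyperspace reduces to controlling images of points. Second, $d_H$ restricts to the original metric on singletons, $d_H(\{x\},\{y\}) = d(x,y)$, a fact already recorded in the preliminaries on the Hausdorff metric. The hypothesis $\mathcal{F}(X) \subseteq \Psi$ will be invoked only in the converse, to guarantee that every singleton lies in $\Psi$.

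For the forward direction, I would assume $(X,\mathbb{F})$ is uniformly equicontinuous. Given $\epsilon > 0$, apply uniform equicontinuity with $\epsilon/2$ to obtain $\delta > 0$ such that $d(x,y) < \delta$ forces $d(\omega_n(x), \omega_n(y)) < \epsilon/2$ for all $n \in \mathbb{N}$ and all $x,y \in X$. Now take $A, B \in \Psi$ with $d_H(A,B) < \delta$, so that $A \subseteq S_\delta(B)$ and $B \subseteq S_\delta(A)$. For each $a \in A$ choose $b \in B$ with $d(a,b) < \delta$; then $d(\omega_n(a), \omega_n(b)) < \epsilon/2$, whence $\omega_n(a) \in S_{\epsilon/2}(\omega_n(B))$. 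This yields $\omega_n(A) \subseteq S_{\epsilon/2}(\omega_n(B))$, and by symmetry $\omega_n(B) \subseteq S_{\epsilon/2}(\omega_n(A))$, so $d_H(\omega_n(A), \omega_n(B)) \le \epsilon/2 < \epsilon$ for every $n$. Since $\delta$ was chosen independently of $A$, $B$, and $n$, this is exactly uniform equicontinuity of $(\Psi, \overline{\mathbb{F}})$.

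For the converse, I would assume $(\Psi, \overline{\mathbb{F}})$ is uniformly equicontinuous and use $\mathcal{F}(X) \subseteq \Psi$ to ensure every singleton $\{x\}$ belongs to $\Psi$. Given $\epsilon > 0$, obtain $\delta > 0$ from uniform equicontinuity of the induced system. For $x,y \in X$ with $d(x,y) < \delta$, the singleton isometry gives $d_H(\{x\},\{y\}) = d(x,y) < \delta$, hence $d_H(\overline{\omega_n}(\{x\}), \overline{\omega_n}(\{y\})) < \epsilon$ for all $n$. Since $\overline{\omega_n}(\{x\}) = \{\omega_n(x)\}$, the same isometry gives $d(\omega_n(x),\omega_n(y)) < \epsilon$ for all $n$, establishing uniform equicontinuity of $(X,\mathbb{F})$.

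I expect no genuine obstacle here: both implications are routine once the two driving facts are in place. The only point requiring a little care is the passage from non-strict to strict inequalities in the forward direction, which is handled by working with $\epsilon/2$. As with the preceding propositions, the argument nowhere uses constancy of the maps in time, so the result is a straightforward analogous extension of its autonomous counterpart.
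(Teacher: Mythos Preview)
Your proof is correct and is exactly the standard Hausdorff-metric argument the paper has in mind: the paper does not write out a proof for this proposition but merely lists it among the ``trivial analogous extensions'' of the autonomous case, and your argument is precisely that extension, making no use of the time-constancy of the maps. The only cosmetic remark is that $\mathcal{F}(X)$ here denotes the finite subsets (so in particular the singletons), which is what you use in the converse.
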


\begin{p} $(X,\mathbb{F})$ is uniformly equicontinuous if and only if
$(CL(X), \overline{\mathbb{F}})$ is uniformly equicontinuous.
\end{p}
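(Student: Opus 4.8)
The plan is to exploit two elementary features of the Hausdorff metric: first, that $d_H$ preserves the underlying metric on singletons, $d_H(\{x\},\{y\}) = d(x,y)$, and second, that a bound on $d(\omega_n(a),\omega_n(b))$ valid for all nearby pairs $a,b$ translates directly into a bound on $d_H(\omega_n(A),\omega_n(B))$. Since $X$ is compact, $CL(X)=\mathcal{K}(X)$ and each $\omega_n(A)$ is compact, so $\overline{\omega_n}(A)=\omega_n(A)$ is a genuine element of $CL(X)$ and the induced system is well defined; this places the statement in the same framework as the preceding propositions with $\Psi = CL(X)$, and I would give a direct argument in both directions.

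For the forward implication, I would fix $\epsilon>0$ and invoke uniform equicontinuity of $(X,\mathbb{F})$ to obtain $\delta>0$ such that $d(x,y)<\delta$ forces $d(\omega_n(x),\omega_n(y))<\epsilon$ for every $n\in\mathbb{N}$ and all $x,y\in X$. Now take $A,B\in CL(X)$ with $d_H(A,B)<\delta$. By definition of the Hausdorff metric, each $a\in A$ admits $b\in B$ with $d(a,b)<\delta$, whence $d(\omega_n(a),\omega_n(b))<\epsilon$ and $\omega_n(a)\in S_\epsilon(\omega_n(B))$; thus $\omega_n(A)\subseteq S_\epsilon(\omega_n(B))$, and symmetrically $\omega_n(B)\subseteq S_\epsilon(\omega_n(A))$, for every $n$. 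Consequently $d_H(\overline{\omega_n}(A),\overline{\omega_n}(B))\le\epsilon$ uniformly in $n$, which (after the customary passage from $\epsilon$ to $\epsilon/2$) gives uniform equicontinuity of $(CL(X),\overline{\mathbb{F}})$. The crucial point is that the single $\delta$ produced by uniform equicontinuity is independent of $x,y$ and of $n$, so the containment estimate holds with the same $\delta$ for all closed sets and all times.

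For the converse, I would restrict the hyperspace estimate to singletons. Given $\epsilon>0$, uniform equicontinuity of $(CL(X),\overline{\mathbb{F}})$ yields $\delta>0$ with $d_H(A,B)<\delta\Rightarrow d_H(\overline{\omega_n}(A),\overline{\omega_n}(B))<\epsilon$ for all $n$. If $d(x,y)<\delta$, then $d_H(\{x\},\{y\})=d(x,y)<\delta$, and since $\overline{\omega_n}(\{x\})=\{\omega_n(x)\}$, we obtain $d(\omega_n(x),\omega_n(y))=d_H(\{\omega_n(x)\},\{\omega_n(y)\})<\epsilon$ for every $n$. Hence $(X,\mathbb{F})$ is uniformly equicontinuous.

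I do not anticipate a genuine obstacle: the result is a faithful transcription of the autonomous argument, since none of the steps use constancy of the maps $f_n$. The only point demanding a little care is the forward set-containment estimate, where one must check that the uniformity of $\delta$ over all $x,y$ and $n$ is exactly what makes the Hausdorff bound uniform over all closed sets and all iterates; this is precisely where compactness of $X$ (guaranteeing $\overline{\omega_n}(A)\in CL(X)$) and the uniform (as opposed to merely pointwise) hypothesis enter.
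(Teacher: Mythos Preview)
Your proof is correct and matches the paper's intended approach: the paper does not actually write out a proof for this proposition, instead listing it among the results ``whose autonomous version do not use constancy of $f$ and hence are trivial extensions of their autonomous versions,'' and your argument is precisely that standard autonomous Hausdorff-metric computation carried verbatim into the non-autonomous setting. Your explicit observation that nothing in the containment estimate uses time-invariance of the maps is exactly the point the paper is making by omitting the details.
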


\begin{p}
Let $(X,f)$ be a dynamical system and let $(\mathcal{K}(X), \overline{\mathbb{F}})$ be the induced dynamical system on the hyperspace. If $\mathcal{F}_1(X) \subseteq \Psi$, the system $(X,\mathbb{F})$ has positive topological entropy implies $(\Psi, \overline{\mathbb{F}})$ has a positive topological entropy. However, the converse is not true.
\end{p}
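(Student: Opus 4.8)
The plan is to realise the base system as a subsystem of the induced one through the singleton embedding, and then to appeal to two standard features of topological entropy---its invariance under conjugacy and its monotonicity under restriction to a closed invariant subset, both of which carry over verbatim from the autonomous to the non-autonomous open-cover definition given above. First I would introduce $\iota:X\to\Psi$, $\iota(x)=\{x\}$. Admissibility of $\Delta$ makes $\iota$ continuous; since $X$ is compact and $\Psi$ is Hausdorff, $\iota(X)=\mathcal{F}_1(X)$ is compact, hence closed in $\Psi$, and $\iota$ is a homeomorphism of $X$ onto it. The defining identity $\overline{\omega_k}(\{x\})=\{\omega_k(x)\}$ shows that $\mathcal{F}_1(X)$ is $\overline{\mathbb{F}}$-invariant and that $\iota\circ\omega_k=\overline{\omega_k}\circ\iota$ for every $k$; thus $\iota$ is a topological conjugacy between $(X,\mathbb{F})$ and the subsystem $(\mathcal{F}_1(X),\overline{\mathbb{F}}|_{\mathcal{F}_1(X)})$.

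Next I would transfer this to entropies working directly with the open-cover definition. The homeomorphism $\iota$ carries open covers of $X$ bijectively to open covers of $\mathcal{F}_1(X)$ and, because it intertwines $\omega_k$ with $\overline{\omega_k}$, it preserves the joins $\mathcal{U}\vee\omega_1^{-1}(\mathcal{U})\vee\cdots\vee\omega_{k-1}^{-1}(\mathcal{U})$ together with their minimal subcover cardinalities; hence $h(\mathbb{F})=h(\overline{\mathbb{F}}|_{\mathcal{F}_1(X)})$. For monotonicity I would use that $\mathcal{F}_1(X)$ is closed: given an open cover $\mathcal{V}$ of $\mathcal{F}_1(X)$, adjoin the open set $\Psi\setminus\mathcal{F}_1(X)$ to obtain an open cover $\mathcal{U}$ of $\Psi$; invariance of $\mathcal{F}_1(X)$ forces the preimages of the adjoined set under the $\overline{\omega_j}$ to miss $\mathcal{F}_1(X)$, so the join $\mathcal{U}\vee\overline{\omega_1}^{-1}(\mathcal{U})\vee\cdots$ restricts on $\mathcal{F}_1(X)$ to the corresponding join built from $\mathcal{V}$. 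Since covering the subset $\mathcal{F}_1(X)$ is no harder than covering $\Psi$, the associated $H$-values satisfy $h_{\overline{\mathbb{F}}|_{\mathcal{F}_1(X)},\mathcal{V}}\le h_{\overline{\mathbb{F}},\mathcal{U}}\le h(\overline{\mathbb{F}})$, and taking the supremum over $\mathcal{V}$ gives $h(\overline{\mathbb{F}}|_{\mathcal{F}_1(X)})\le h(\overline{\mathbb{F}})$. Combining the two displays yields $h(\mathbb{F})\le h(\overline{\mathbb{F}})$, so $h(\mathbb{F})>0$ forces $h(\overline{\mathbb{F}})>0$, which is the forward implication.

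For the converse I would produce an explicit counterexample rather than argue in general. It is known that there exist autonomous systems $(X,f)$ with $h(f)=0$ whose induced hyperspace map satisfies $h(\overline{f})>0$. Since the autonomous system $(X,f)$ is exactly the non-autonomous system generated by the constant family $\mathbb{F}=\{f,f,\ldots\}$, such an $(X,f)$ already furnishes a non-autonomous system with $h(\mathbb{F})=0$ but $h(\overline{\mathbb{F}})>0$, so the converse fails. If a genuinely time-varying witness is preferred, one may instead take $\mathbb{F}=\{f,I,f,I,\ldots\}$ as in the sensitivity example above: here $\omega_{2k-1}=\omega_{2k}=f^{k}$, so that $h(\mathbb{F})=\tfrac12 h(f)=0$, while the induced compositions satisfy $\overline{\omega_{2k-1}}=\overline{f}^{\,k}$, giving $h(\overline{\mathbb{F}})=\tfrac12 h(\overline{f})>0$.

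The forward implication is the routine part: invariance under conjugacy and monotonicity under closed invariant subsystems are proved exactly as in the autonomous setting, using only that $\iota$ intertwines the compositions $\omega_k$ and $\overline{\omega_k}$ and never the constancy of the rule, so they extend without change. I expect the main obstacle to lie in the converse---specifically in the assertion that a zero-entropy base map can induce a positive-entropy hyperspace map, whose verification is delicate and is most cleanly handled by quoting the known autonomous construction and observing that it is already a (constant-family) non-autonomous system. A minor point still worth checking is that $\mathcal{F}_1(X)$ is genuinely closed in $\Psi$ for the given admissible topology $\Delta$, which the monotonicity step relies on; I would secure this from compactness of $X$ and Hausdorffness of $\Psi$ as indicated above.
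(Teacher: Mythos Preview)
Your argument is correct and is precisely the route the paper has in mind: the proposition is stated without proof and is listed among those results ``whose autonomous version do not use constancy of $f$ and hence are trivial extensions of their autonomous versions,'' i.e.\ the singleton-embedding argument from \cite{pa1} together with the autonomous counterexample for the converse. You have simply written out the details---conjugacy via $\iota$, monotonicity under the closed invariant subsystem $\mathcal{F}_1(X)$, and the constant-family (or alternating $f,I$) counterexample---that the paper leaves implicit.
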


\begin{p}
Let $\mathcal{F}_1(X) \subseteq \Psi \subseteq  CL(X)$. Then,
$(\Psi, \overline{\mathbb{F}})$ is $\delta$-expansive implies $(X,\mathbb{F})$ is
$\delta$-expansive.
\end{p}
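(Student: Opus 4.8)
The plan is to exploit the isometric singleton embedding $x \mapsto \{x\}$ of $X$ into the hyperspace and to observe that it intertwines the two dynamics. Since $\mathcal{F}_1(X) \subseteq \Psi$, every singleton $\{x\}$ lies in $\Psi$, so this map takes values in $\Psi$ and sends distinct points of $X$ to distinct points of $\Psi$. The whole argument is then just a matter of transporting the expansivity estimate from the hyperspace back down to $X$ along this embedding.

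First I would fix an arbitrary pair of distinct elements $x, y \in X$ and pass to the corresponding singletons $\{x\}, \{y\} \in \Psi$, which are distinct points of the hyperspace. Applying the hypothesis that $(\Psi, \overline{\mathbb{F}})$ is $\delta$-expansive to this pair yields some $k \in \mathbb{Z}^+$ with $d_H(\overline{\omega_k}(\{x\}), \overline{\omega_k}(\{y\})) > \delta$.

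Next I would reduce everything back to $X$ using two elementary facts. From the defining relation $A_n = \omega_n(A_0)$ of the induced system, applied to the seeds $A_0 = \{x\}$ and $A_0 = \{y\}$, the image of a singleton under $\overline{\omega_k}$ is again a singleton, namely $\overline{\omega_k}(\{x\}) = \{\omega_k(x)\}$ and $\overline{\omega_k}(\{y\}) = \{\omega_k(y)\}$. Combined with the fact, recorded in the preliminaries, that the Hausdorff metric preserves the ground metric on singletons, $d_H(\{a\}, \{b\}) = d(a,b)$, this gives $d(\omega_k(x), \omega_k(y)) = d_H(\{\omega_k(x)\}, \{\omega_k(y)\}) > \delta$. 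Since $x, y$ were an arbitrary distinct pair, $(X, \mathbb{F})$ is $\delta$-expansive.

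There is no substantive obstacle here: the only point to verify is that $\overline{\omega_k}$ carries a singleton to the singleton of its image, which is immediate from the definition of the induced map, together with the fact that $\Psi$ actually contains these singletons (supplied by the hypothesis $\mathcal{F}_1(X) \subseteq \Psi$). As with the preceding propositions, the argument never uses time-invariance of the governing rule, so it stands as a direct analogue of the autonomous statement.
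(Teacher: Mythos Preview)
Your argument is correct and is exactly the intended one: the paper does not spell out a proof for this proposition but lists it among the ``trivial analogous extensions'' of the autonomous results, and the singleton embedding $x\mapsto\{x\}$ together with $d_H(\{a\},\{b\})=d(a,b)$ and $\overline{\omega_k}(\{x\})=\{\omega_k(x)\}$ is precisely the mechanism behind that triviality. There is nothing to add.
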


\begin{p} Let $\Psi$ contain the set of all singletons. If $(X,\mathbb{F})$ is Li-
Yorke chaotic, so is $( \Psi, \overline{\mathbb{F}})$. However, the converse is not true.
\end{p}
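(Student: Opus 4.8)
The plan is to transport a scrambled set from the base into the hyperspace through the singleton embedding. Since the notions of scrambled set and Li-Yorke chaos are metric, I work with $\Psi\subseteq\mathcal{K}(X)$ carrying the Hausdorff metric $d_H$. As $(X,\mathbb{F})$ is Li-Yorke chaotic, fix an uncountable $\delta$-scrambled set $S\subseteq X$, and because $\Psi$ contains every singleton the map $x\mapsto\{x\}$ carries $S$ injectively onto an uncountable set $\widetilde{S}=\{\{x\}:x\in S\}\subseteq\Psi$. First I would record the two structural facts that drive everything: the induced maps respect the singleton embedding, so $\overline{\omega_n}(\{x\})=\{\omega_n(x)\}$ for all $n$, and the Hausdorff metric restricts to the original metric on singletons, so $d_H(\{a\},\{b\})=d(a,b)$. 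Combining these gives $d_H(\overline{\omega_n}(\{x\}),\overline{\omega_n}(\{y\}))=d(\omega_n(x),\omega_n(y))$ for every $n$, i.e. the hyperspace distance sequence coincides termwise with the base one. Hence for each distinct pair $\{x\},\{y\}\in\widetilde{S}$ we get $\limsup_{n\to\infty} d_H(\overline{\omega_n}(\{x\}),\overline{\omega_n}(\{y\}))>\delta$ and $\liminf_{n\to\infty} d_H(\overline{\omega_n}(\{x\}),\overline{\omega_n}(\{y\}))=0$, so $\widetilde{S}$ is an uncountable $\delta$-scrambled set and $(\Psi,\overline{\mathbb{F}})$ is Li-Yorke chaotic.

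\textbf{Converse fails.} For the counterexample I would follow the lifting scheme already used for dense periodicity and for sensitivity in this paper. Start from an autonomous system $(X,f)$ for which the induced system $(\mathcal{K}(X),\overline{f})$ is Li-Yorke chaotic while $(X,f)$ is not; the existence of such a witness for the autonomous setting is what I would invoke from the hyperspace literature (cf. \cite{ba,pa2}). Setting $\mathbb{F}=\{f,I,f,I,\ldots\}$ with $I$ the identity, one computes $\omega_{2k-1}=\omega_{2k}=f^{k}$, so for every pair the sequence $\big(d(\omega_n(x),\omega_n(y))\big)_n$ is precisely $\big(d(f^k(x),f^k(y))\big)_k$ with each term repeated, whence its $\limsup$ and $\liminf$ agree with those of the autonomous orbit. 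Therefore $S\subseteq X$ is $\delta$-scrambled for $(X,\mathbb{F})$ if and only if it is $\delta$-scrambled for $(X,f)$, and the identical comparison holds verbatim on the hyperspace. This transfers the autonomous separation to the non-autonomous setting: $(X,\mathbb{F})$ inherits the absence of an uncountable scrambled set while $(\mathcal{K}(X),\overline{\mathbb{F}})$ inherits Li-Yorke chaos, giving the desired example.

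\textbf{Main obstacle.} The forward direction is routine once the two singleton identities are in hand, so the real difficulty sits entirely in the converse, and specifically in producing (or precisely locating) the autonomous witness $(X,f)$ with $\overline{f}$ Li-Yorke chaotic but $f$ not. I expect this to be the crux: one must arrange that the scrambled pairs genuinely live among non-degenerate compact sets, where $d_H$ can oscillate between small and large values as the sets alternately spread across $X$ and reconcentrate, while every pair of points of $X$ fails the scrambled condition (for instance by making the point dynamics distal, so that $\liminf_{n} d(\omega_n(x),\omega_n(y))>0$ for distinct $x,y$ and no scrambled pair of singletons can exist). Verifying both halves simultaneously for a concrete map is the delicate part; the termwise comparison of distance sequences established above is exactly what discharges the routine check that interleaving with $I$ neither creates nor destroys scrambled pairs.
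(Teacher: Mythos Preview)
The paper does not supply an explicit proof of this proposition; it is placed in the block of results introduced as ``trivial extensions of their autonomous versions'' (see the remark preceding Proposition~4), so there is no detailed argument to compare against. Your forward direction via the singleton embedding together with the identities $\overline{\omega_n}(\{x\})=\{\omega_n(x)\}$ and $d_H(\{a\},\{b\})=d(a,b)$ is exactly the standard autonomous argument carried verbatim to the non-autonomous setting, which is precisely what the paper intends.

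For the failure of the converse your reduction to an autonomous witness is also what the paper has in mind, but the interleaving with the identity is superfluous: an autonomous system $(X,f)$ \emph{is} a non-autonomous system (take $f_n=f$ for all $n$), so any autonomous counterexample already serves directly without modification. Your identification of the genuine obstacle---producing or precisely citing an autonomous $(X,f)$ with $(\mathcal{K}(X),\overline{f})$ Li-Yorke chaotic while $(X,f)$ is not---is accurate, and the paper does not discharge this either; it simply asserts that the converse fails by appeal to the autonomous hyperspace literature. In that sense your proposal is at least as complete as the paper's own treatment.
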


\section{Conclusion}
In this work, we have established relation between the dynamical behavior of a non-autonomous system and its induced counterpart. We have established that while properties like topologically mixing, strong sensitivity and uniform equicontinuity are equivalent for the two systems $(X,\mathbb{F})$ and $(\Psi,\overline{\mathbb{F}})$, weak mixing is equivalent for the two systems when the family $\mathbb{F}$ is commutative. We establish that if induced system is sensitive or transitive then the original system exhibits similar dynamical behavior. It is observed that the results obtained for the non-autonomous case are analogous extensions of their autonomous versions. It is further observed that many of the proofs for the autonomous version do not use the autonomous nature of the system and hence many of the results for the autonomous case extend naturally to the non-autonomous case. Further, as commutativity of the family $\mathbb{F}$ establishes equivalence of weak mixing for the two systems, it is expected that if the non-autonomous system is "nice enough", many of the results from the autonomous case can be extended to the non-autonomous version. As a result one may consider properties like distality, minimality, syndetic sensitivity and other dynamical notions for further investigation. It may also be possible to improve(under additional natural conditions) results for properties like Li-Yorke sensitivity, Li-Yorke chaoticity and topological entropy. However we do not consider these questions in this article and leave them open for further investigation. It is emphasised that if the system is "nice enough", the dynamical behavior between the two systems does not change drastically when compared to the autonomous version. The result is an indicator of the qualitative stability of the long term behavior as any autonomous(non-autonomous) system can be approximated using non-autonomous(autonomous) system.

\bibliography{xbib}

\end{document}